\newcommand{\margnote}[1]{
\ifthenelse{\boolean{shownotes}}%
{\marginpar{\raggedright\tiny\texttt{#1}}}%
{}%
}
\newcommand{\hole}[1]{
\ifthenelse{\boolean{shownotes}}%
{\begin{center} \fbox{ \rule {.25cm}{0cm}
\rule[-.1cm]{0cm}{.4cm} \parbox{.85\textwidth}{\begin{center}
\texttt{#1}\end{center}} \rule {.25cm}{0cm}}\end{center}}
{}
}
\newtheorem{theorem}{Theorem}[section]
\newtheorem{proposition}[theorem]{Proposition}
\newtheorem{lemma}[theorem]{Lemma}
\newtheorem{definition}[theorem]{Definition}
\theoremstyle{remark}
\newtheorem{remark}[theorem]{Remark}
\newcommand{\R}{\mathbb{R}}
\newcommand{\tore}{\mathbb{T}^3}
\newcommand{\N}{\mathbb{N}}
\newcommand{\Z}{\mathbb{Z}}
\newcommand{\dive}{\mathop{\mathrm {div}}}
\newcommand{\weakto}{\rightharpoonup}
\newcommand{\pth}{p_h^{m}}
\newcommand{\uth}{u_h^{m,\theta}}
\numberwithin{equation}{section}
\subjclass[2010]{35Q30,76M10, 76M20.}
\keywords{Navier-Stokes Equations, Local energy inequality, Numerical schemes,
  $\theta$-method, Finite Element and Finite Difference Methods}
\begin{document}
\title[Suitable weak solutions of the Navier-Stokes equations]{Suitable weak solutions of
  the Navier-Stokes equations constructed by a space-time numerical discretization}
\author[L.C. Berselli]{Luigi C. Berselli} 
\address[L. C. Berselli] {Dipartimento di Matematica
  \\
  Universit\`a degli Studi di Pisa, Via F. Buonarroti 1/c, I-56127 Pisa, Italy}
\email[]{\href{luigi.carlo.berselli@unipi.it}{luigi.carlo.berselli@unipi.it}}
\author[S. Fagioli]{Simone Fagioli}
\address[S. Fagioli]{DISIM - Dipartimento di Ingegneria e Scienze
  dell'Informazione e Matematica
  \\
  Universit\`a degli Studi dell'Aquila, Via Vetoio I-67100 L'Aquila,
  Italy.}
\email[]{\href{simone.fagioli@univaq.it}{simone.fagioli@univaq.it}}
\author[S. Spirito]{Stefano Spirito} 
\address[S. Spirito]{DISIM - Dipartimento di Ingegneria e Scienze dell'Informazione e
  Matematica 
  \\
 Universit\`a degli Studi dell'Aquila, Via Vetoio, I-67100  L'Aquila, Italy.}
\email[]{\href{stefano.spirito@univaq.it}{stefano.spirito@univaq.it}}

\begin{abstract}
  We prove that weak solutions obtained as limits of certain numerical space-time
  discretizations are suitable in the sense of Scheffer and
  Caffarelli-Kohn-Nirenberg. More precisely, in the space-periodic setting, we consider a
  full discretization in which the $\theta$-method is used to discretize the time
  variable, while in the space variables we consider appropriate families of finite
  elements. The main result is the validity of the so-called local energy inequality.
\end{abstract}
\maketitle
\section{Introduction}
We consider the homogeneous incompressible 3D~Navier-Stokes equations
\begin{equation}
  \label{eq:nse}
  \begin{aligned}
    \partial_t u-\Delta u+(u\cdot\nabla)\,u+\nabla p&=0\qquad\textrm{ in }
    (0,T)\times\tore,
    \\
    \dive u&=0\qquad\textrm{ in }
    (0,T)\times\tore,
     \end{aligned}
\end{equation}
in the space periodic setting, with a divergence-free initial datum
 \begin{equation}
   \label{eq:nsid}
 u|_{t=0}=u_0\qquad\textrm{ in }\tore,
 \end{equation}
 where $T>0$ is arbitrary and $\tore$ is the three dimensional flat torus. Here, the
 unknowns are the vector field $u$ and the scalar $p$, which are both with zero mean
 value.  The aim of this paper is to consider a space-time discretization of the initial
 value problem~\eqref{eq:nse}-\eqref{eq:nsid} and to prove  convergence (as the
 parameters of the discretization vanish) to a Leray-Hopf weak solution, satisfying also
 the local energy inequality
 \begin{equation*}
   \partial_t\left(\frac{|u|^2}{2}\right)+\dive\left(\left(\frac{|u|^2}{2}+p\right)u\right)-\Delta
   \left(\frac{|u|^2}{2}\right)+|\nabla u|^2\leq 0\quad \text{in }\mathcal{D}'(]0,T[\times\tore). 
 \end{equation*}
 Solution satisfying the above inequality (and minimal assumptions on the pressure) are
 known in literature as \textit{suitable weak solutions} and they are of fundamental
 importance for at least two reasons: 1) From the theoretical point of view it is known
 that for these solutions the possible set of singularities has vanishing 1D-parabolic
 Hausdorff measure, see Scheffer~\cite{Sche1977} and
 Caffarelli-Kohn-Nirenberg~\cite{CKN1982}; 2) The local energy inequality is a sort of
 entropy condition and, even if it is not enough to prove uniqueness, it seems a natural
 request to select physically relevant solutions; for this reason the above inequality has
 to be satisfied by solutions constructed by numerical methods, see Guermond \textit{et
   al.}~\cite{Gue2008,GOP2004}.\par
 The terminology (and an existence result) for suitable weak solutions can be found in
 Caffarelli, Kohn, and Nirenberg~\cite{CKN1982}, where a retarded-time approximation
 method has been used in the construction. Since uniqueness is not known in the class of
 weak solutions, the question of understanding which are the approximations producing
 suitable solutions became central, see the papers by Beir\~ao da
 Veiga~\cite{Bei1985a,Bei1985b,Bei1986}. In these papers it has also been raised the
 question whether the ``natural'' Faedo-Galerkin methods will produce suitable solutions
 and the question remained completely unsolved for almost twenty years, when the positive
 answer, at least for certain finite element spaces, appeared in
 Guermond~\cite{Gue2006,Gue2007}. (In the above papers the space discretization is
 considered, while the time variable is kept continuous). It is important to observe that,
 most of the known regularization procedures to construct Leray-Hopf weak solutions of the
 Navier-Stokes equations (hyper-viscosity, Leray, Leray-$\alpha$, Voigt, artificial
 compressibility\dots) seem to produce, in the limit, solutions satisfying the local energy
 inequality. We started a systematics study of this question and, even if technicalities
 could be rather different, we obtained several positive answers,
 see~\cite{BS2017a,BS2017b,DS2011}.  The technical problems related with discrete
 (numerical, finite dimensional) approximations are of a different nature. In particular,
 obtaining the local energy inequality is ``formally'' based on multiplying the equations
 by $u\,\phi$, where $\phi$ is a non-negative bump function, and integrating in both space
 and time variables. Clearly, if $u$ belongs to a finite dimensional space $X_h$ (say of
 finite elements), then $u\,\phi$ is not allowed to be used as a test function and one
 needs to project $u\,\phi$ back on $X_h$. This is the reason why results obtained
 in~\cite{Gue2006,Gue2007} require the use of spaces satisfying a suitable commutator
 property, which is a local property, see Section~\ref{sec:space discretization}.  In
 particular, the standard Fourier-Galerkin method (which is not local, being a spectral
 approximation) does not satisfy the commutator property and the convergence to a suitable
 weak solution is still an interesting open problem, see the partial results in Craig
 \textit{et al.}~\cite{BCI2007}, with interesting links with the global energy
 equality.\par
 From the numerical point of view another important issue is that of considering also the
 time discretization, hence going from a semi-discrete scheme to a fully-discrete one.
 Also regarding this issue few results are available. In~\cite{BS2016} it is proved that
 solutions of space-periodic Navier-Stokes constructed by semi-discretization (in the time
 variable, with the standard implicit Euler algorithm) are suitable. The argument has been
 also extended to a general domain in~\cite{B2017} assuming vorticity-based slip boundary
 conditions, which are important in the vanishing viscosity problem~\cite{BS2012,
   BS2014}. The case of Dirichlet boundary conditions has been treated in the implicit
 case in the last section of~\cite{GM2012},
 by using some of the result obtained by Sohr and Von Wahl~\cite{SvW1984} in the
 continuous case.
\par
The aim of this paper is to extend the results from~\cite{Gue2006}
and~\cite{BS2016,GM2012} to a general space-time numerical discretization with a general
$\theta$-method in the time-variable and finite elements in the space variables. The
extension of the results regarding only on the space or only on the time discretization
presents some additional difficulties and it is not just a combination of the previous
ones. In particular, the main core of the proof is obtaining appropriate a-priori
estimates and using compactness results. Contrary to~\cite{Gue2006,BS2016} results
obtained here require a more subtle compactness argument for space-time discrete functions
and the main theorem is obtained by using a technique borrowed from the treatment of
non-homogeneous and compressible fluids and resembling the compensated compactness
arguments, see P.L.~Lions~\cite[Lemma~5.1]{PLL}. Observe that in present paper the simple convergence
in the sense of distributions is not enough, contrary to the case of the product
density/velocity in the weak formulation of the Navier-Stokes equations with variable
density. We also observe that, at present, the extension to the Dirichlet problem, as
in~\cite{Gue2007}, seems a challenging problem; the estimates in the negative spaces
obtained in~\cite{Gue2007} look not enough to handle the discretization in time made with
step functions, which cannot be in fractional Sobolev spaces with order larger than
one-half.\par

 To set the problem we consider as in~\cite{Gue2006} two sequences of discrete
 approximation spaces $\{X_{h}\}_{h}\subset H_{\#}^{1} $ and $\{M_{h}\}_{h}\subset
 H_{\#}^{1}$ which satisfy --among other properties described in Section~\ref{sec:dis}--
 an appropriate commutator property, see Definition~\ref{def:dcp}. Then, given a net
 $t_m:=m\,\Delta t$ we consider the following space-time discretization of the
 problem~\eqref{eq:nse}-\eqref{eq:nsid}: Set $u_h^0=P_h(u_0^h)$, where $P_h$ is the
 projection over $ X_h$. For any $m=1,...,N$ and given $u_h^{m-1}\in X_h$ and
 $p_h^{m-1}\in M_{h}$, find $u_{h}^m\in X_h$ and $p^m\in M_{h}$ such that
\begin{equation}
  \label{eq:disintro}
  \begin{aligned}
    \left(d_t u^m_h,v_h\right)+(\nabla u_{h}^{m,\theta},\nabla v_h)+b_h(u_{h}^{m,\theta},
    u_{h}^{m,\theta},v_h)-( p_{h,}^{m},\dive v_h)=0,
    \\
    ( \dive u_{h}^{m},q_h)=0,
  \end{aligned}
\end{equation}
where $u_h^{m,\theta}:=\theta\, u^m_h+(1-\theta)\,u^{m-1}_h$, $d_t
u^m:=\frac{u_h^{m}-u_h^{m-1}}{\Delta t}$, and $b_h(u_{h}^{m,\theta},
u_{h}^{m,\theta},v_h)$ is a suitable discrete approximation of the non-linear term; see
also Quarteroni and Valli~\cite{QV1994} and Thom\'ee~\cite{Tho1997} for general properties
of $\theta$-schemes for parabolic equations. We also remark that the scheme with the
non-linear term of the type $b_h(u_{h}^{m}, u_{h}^{m,\theta},v_h)$ or $b_h(u_{h}^{m-1},
u_{h}^{m,\theta},v_h)$ could be probably treated with similar methods but it seems to
require some non trivial modifications of the proofs, this will be left for future
studies, hoping to find a way to unify the treatment and have a general abstract
argument. However, we point out that the fact that in the second entry of the tri-linear
term $b_h(\,.\,, u_{h}^{m,\theta},\,.\,)$ there is $u_{h}^{m,\theta}$ is crucial in order
to exploit the cancellation of the non-linear term in the basic energy estimate, see
Lemma~\ref{lem:discene}. For further notations, definitions, and properties regarding the
scheme~\eqref{eq:disintro} we refer to Sections~\ref{sec:pre}-\ref{sec:dis}.\par
As usual in time-discrete problem (see for instance~\cite{Tem1977b}), in order to study
the convergence to the solutions of the continuous problem it is useful to
rewrite~\eqref{eq:disintro} on $(0,T)$ as follows:
\begin{equation}
  \label{eq:introcont}
  \begin{aligned}
    \left(\partial_t v^{\Delta t}_{h},w_h\right)+\left(\nabla u^{\Delta t}_{h},\nabla
      w_h\right)+b_h\left(u^{\Delta t}_{h},u^{\Delta t}_{h},w_h\right)-\left( p^{\Delta
        t}_{h},\dive q_h\right)=0,
    \\
    \left(\dive u^{\Delta t}_{h}, w_h\right)=0,
  \end{aligned}
\end{equation}
where $v^{\Delta t}_{h}$ is the linear interpolation of $\{u^{m}_h\}_{m=1}^{N}$ (over the
net $t_m=m\Delta t$), while $u^{\Delta t}_{h}$ and $p^{\Delta t}_{h}$ are the time-step
functions which on the interval $[t_{m-1}, t_{m})$ are equal to $u^{m,\theta}_h$ and
$p^{m}_h$, respectively, see~\eqref{eq:vm}. Notice that $(v^{\Delta t}_{h}, u^{\Delta
  t}_{h}, p^{\Delta t}_{h})$ depends implicitly on $\theta$, which is fixed in
$[0,1]$. However, in order to avoid heavy notations, we do not use denoting this
dependence in an explicit way.

The main result of the paper is the following, and we refer to Section~\ref{sec:pre} for
further details on the notation.
\begin{theorem}
  \label{teo:main} Let the finite element space $(X_{h},M_{h})$ satisfy the discrete
  commutation property, and the technical conditions described in
  Section~\ref{sec:space discretization}.  Let $u_{0}\in H^{1}_{\dive}$ and $\theta\in
(1/2, 1]$. Let $\{(v^{\Delta
    t}_{h}, u^{\Delta t}_{h}, p^{\Delta t}_{h})\}_{\Delta t,h}$ be a sequence of solutions
  of~\eqref{eq:introcont} computed by solving~\eqref{eq:disintro}. Then, there exists
\begin{equation*}
  (u,p)\in L^\infty(0,T;L_{\dive}^2)\cap L^2(0,T;H^1_{\dive})\times
  L^{4/3}(0,T;L^{2}_{\#}), 
\end{equation*}
such that, up to a sub-sequence, as $(\Delta t,h)\to(0,0)$, 
\begin{equation*}
  \begin{aligned}
    &v^{\Delta t}_{h}\rightarrow u\textrm{ strongly in }L^{2}((0,T)\times\tore),
    \\
    &u^{\Delta t}_{h}\rightarrow u\textrm{ strongly in }L^{2}((0,T)\times\tore),
    \\
    &\nabla u^{\Delta t}_{h}\weakto \nabla u\textrm{ weakly in }L^{2}((0,T)\times\tore),
    \\
    &p^{\Delta t}_{h}\weakto p\textrm{ weakly in }L^{\frac{4}{3}}(0,T;L^2_\#).
  \end{aligned}
\end{equation*}
Moreover, the couple $(u,p)$ is a suitable weak solution of~\eqref{eq:nse}-\eqref{eq:nsid}
in the sense of Definition~\ref{def:suit}.
\end{theorem}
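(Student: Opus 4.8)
The plan is to derive all uniform-in-$(\Delta t,h)$ a-priori bounds first, pass to the limit in the equations to identify $(u,p)$ as a Leray-Hopf weak solution, and only then recover the local energy inequality by testing the discrete scheme against a regularized version of $u^{\Delta t}_h\,\phi$ projected onto $X_h$. First I would test \eqref{eq:disintro} with $v_h=u^{m,\theta}_h$; the condition $\theta\in(1/2,1]$ makes the numerical scheme (strictly) dissipative, so the discrete energy identity from Lemma~\ref{lem:discene} yields a bound on $\{u^m_h\}$ in $\ell^\infty(L^2)\cap\ell^2(H^1)$, together with an extra nonnegative term $(\theta-\tfrac12)\|u^m_h-u^{m-1}_h\|_{L^2}^2$ summable in $m$ — this controls $\partial_t v^{\Delta t}_h$ in a negative-order space and is what will ultimately guarantee $v^{\Delta t}_h$ and $u^{\Delta t}_h$ have the \emph{same} strong limit. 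From the $\ell^2(H^1)$ bound and the discrete equation one reads off $\partial_t v^{\Delta t}_h$ in $L^{4/3}(0,T;(H^1_\#)')$ by estimating the convection term $b_h$ via the interpolation $\|u\|_{L^4}\lesssim\|u\|_{L^2}^{1/4}\|u\|_{H^1}^{3/4}$ (so $u^{\Delta t}_h\in L^{8/3}(L^4)$), which in turn, via the inf-sup/commutator hypotheses on $(X_h,M_h)$, gives the pressure bound $p^{\Delta t}_h\in L^{4/3}(0,T;L^2_\#)$. An Aubin-Lions-type argument (discrete version, as in~\cite{BS2016}) then gives $v^{\Delta t}_h\to u$ strongly in $L^2((0,T)\times\tore)$, and the summable time-increments force $\|u^{\Delta t}_h-v^{\Delta t}_h\|_{L^2((0,T)\times\tore)}\to0$, so $u^{\Delta t}_h\to u$ strongly as well; weak limits of $\nabla u^{\Delta t}_h$ and $p^{\Delta t}_h$ are extracted along a subsequence.

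With this compactness in hand, passing to the limit in the linear terms of \eqref{eq:introcont} is immediate, and the nonlinear term converges because $u^{\Delta t}_h\to u$ strongly in $L^2(L^2)$ and is bounded in $L^{8/3}(L^4)$, so $u^{\Delta t}_h\otimes u^{\Delta t}_h\to u\otimes u$ in, say, $L^{4/3}((0,T)\times\tore)$; consistency of $b_h$ with the continuous trilinear form and density of $\bigcup_h X_h$ handle the test-function side. This identifies $(u,p)$ as a Leray-Hopf weak solution of \eqref{eq:nse}-\eqref{eq:nsid}, and the global energy inequality follows from lower semicontinuity applied to the discrete energy identity (here one uses that $v^{\Delta t}_h(0)=P_h u_0\to u_0$ in $L^2$, using $u_0\in H^1_{\dive}$).

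The heart of the proof — and the main obstacle — is the local energy inequality. Formally one wants to test the momentum equation with $u\,\phi$ for $0\le\phi\in C_c^\infty(]0,T[\times\tore)$, but $u^{m,\theta}_h\phi\notin X_h$, so I would instead use $v_h=P_h(u^{m,\theta}_h\,\phi_m)$ with $\phi_m$ a suitable time-averaging of $\phi$ over $[t_{m-1},t_m)$, and control the commutator $P_h(u^{m,\theta}_h\phi_m)-u^{m,\theta}_h\phi_m$ using precisely the discrete commutation property of Definition~\ref{def:dcp} (this is where the locality of the finite element spaces, as opposed to Fourier-Galerkin, is indispensable). The four structural terms then produce, after summation in $m$ and rewriting as time-integrals: the kinetic-energy time-derivative (from $d_t u^m_h$ tested against $u^{m,\theta}_h\phi_m$, using the algebraic identity $(a-b)\cdot\tfrac{a+b}{2}=\tfrac12(|a|^2-|b|^2)$ adapted to the $\theta$-weighting, with the leftover $(\theta-\tfrac12)|u^m_h-u^{m-1}_h|^2\phi_m\ge0$ having the \emph{right sign} to be discarded — this is exactly why $\theta>1/2$ is needed), the viscous terms $-\Delta(|u|^2/2)+|\nabla u|^2$ (the $|\nabla u|^2$ term surviving the limit only with a lower-semicontinuity inequality, giving ``$\le$''), the transport term $\dive((|u|^2/2)u)$ from $b_h$, and the pressure term $\dive(pu)$. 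The genuinely delicate point, flagged in the introduction, is that weak convergence of $\nabla u^{\Delta t}_h$ and of $p^{\Delta t}_h$ is \emph{not} enough to pass to the limit in the products $|\nabla u^{\Delta t}_h|^2\phi$ and $p^{\Delta t}_h u^{\Delta t}_h\cdot\nabla\phi$ termwise with the correct sign/value; here I would invoke the compensated-compactness-type lemma in the spirit of~\cite[Lemma~5.1]{PLL}, combining the strong $L^2$ convergence of $u^{\Delta t}_h$ with the weak convergence of $\nabla u^{\Delta t}_h$ and an equi-integrability argument for the pressure (using the $L^{4/3}(L^2)$ bound and the commutator identity to upgrade convergence of $p^{\Delta t}_h u^{\Delta t}_h$), so that after sending $(\Delta t,h)\to(0,0)$ and then removing the regularization on $\phi$, the asserted inequality holds in $\mathcal D'(]0,T[\times\tore)$, which together with the Leray-Hopf property established above yields that $(u,p)$ is suitable in the sense of Definition~\ref{def:suit}.
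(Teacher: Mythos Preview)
Your overall architecture is right, but there is a genuine gap in Step~1, and you have misplaced the role of the Lions-type lemma.

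\textbf{The gap in the compactness step.} You write that the discrete energy identity bounds $\{u^m_h\}$ in $\ell^\infty(L^2)\cap\ell^2(H^1)$. It does not: summing Lemma~\ref{lem:discene} controls $\Delta t\sum_m\|\nabla u^{m,\theta}_h\|_2^2$, i.e.\ the $\ell^2(H^1)$ norm of the \emph{$\theta$-averages} $u^{m,\theta}_h=\theta u^m_h+(1-\theta)u^{m-1}_h$, not of $u^m_h$ itself. Consequently the piecewise-linear interpolant $v^{\Delta t}_h$ carries only the $L^\infty(L^2)$ bound (Proposition~\ref{prop:1}~a)) together with $\partial_t v^{\Delta t}_h\in L^{4/3}(H^{-1})$, while the step function $u^{\Delta t}_h$ carries the $L^2(H^1)$ bound but no time-derivative control (its distributional time derivative is a sum of Dirac masses). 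Neither sequence meets the hypotheses of Aubin--Lions, so your ``Aubin--Lions-type argument (discrete version, as in~\cite{BS2016})'' for strong convergence of $v^{\Delta t}_h$ is not justified as written; in~\cite{BS2016} one has $\theta=1$, so $u^{m,\theta}_h=u^m_h$ and the issue disappears. (One can in fact close this gap by the recursion $u^m_h=\theta^{-1}u^{m,\theta}_h-\tfrac{1-\theta}{\theta}u^{m-1}_h$, using $(1-\theta)/\theta<1$ and $u_0\in H^1_{\dive}$ to propagate an $\ell^2(H^1)$ bound on $u^m_h$ via a discrete Young inequality --- but you do not do this, and neither does the paper.)

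\textbf{How the paper proceeds instead.} The paper's route is exactly the compensated-compactness lemma you cite, but invoked in Step~1, not Step~2. Lemma~\ref{lem:comp} is applied with $f_n=v^{\Delta t}_h$ (time-derivative in $L^{4/3}(H^{-1})$) and $g_n=u^{\Delta t}_h$ (bounded in $L^2(H^1)$) to obtain $v^{\Delta t}_h\,u^{\Delta t}_h\rightharpoonup v\,u$ weakly in $L^1$. Since~\eqref{eq:newest1} gives $\|v^{\Delta t}_h-u^{\Delta t}_h\|_{L^2(L^2)}\to0$, one has $v=u$, hence $|u^{\Delta t}_h|^2\rightharpoonup|u|^2$ in $L^1$, hence norm convergence, hence strong $L^2$ convergence of both $u^{\Delta t}_h$ and $v^{\Delta t}_h$. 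This splitting of regularity between two different interpolants is precisely the ``more subtle compactness argument'' flagged in the introduction.

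\textbf{Step 2.} Once strong $L^2$ convergence of $u^{\Delta t}_h$ is in hand, no compensated compactness is needed for the local energy inequality: $\int p^{\Delta t}_h u^{\Delta t}_h\cdot\nabla\phi$ passes to the limit by weak-times-strong convergence, and $\int|\nabla u^{\Delta t}_h|^2\phi$ by weak lower semicontinuity (using $\phi\ge0$), which is what produces the inequality. Your handling of the commutator errors via Definition~\ref{def:dcp} and of the discrete time-derivative (discarding the nonnegative $(\theta-\tfrac12)$ contribution) is correct in spirit; the paper treats the time derivative slightly differently, integrating by parts in $t$ on the continuous interpolant $v^{\Delta t}_h$ and showing that the cross term $\int_0^T(\partial_t v^{\Delta t}_h,(u^{\Delta t}_h-v^{\Delta t}_h)\phi)\,dt$ vanishes, but either argument works.
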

%
%
\begin{remark}
  Theorem~\ref{teo:main} is not limited to decaying flows, but the result holds also in
  the presence of an external force $f$ satisfying suitable bounds. For example, $f\in
  L^{2}(0,T;L^{2}(\tore))$ is enough. For simplicity we consider the viscosity equal to
  $1$, but clearly the result is valid also for any positive viscosity.
\end{remark}
The proof of Theorem~\ref{teo:main} is given in Section~\ref{sec:5} and it is based on a
compactness argument.  We briefly explain the main novelty in the proof: First from the
standard discrete energy inequality (Lemma~\ref{lem:discene}) only an $H^{1}$-bound in
space on $u^{\Delta t}_{h}$ is available but no compactness in time. Note that this is not
enough in general to deduce strong convergence of $u^{\Delta t}_{h}$, which turns out to
be necessary to prove even the convergence to a Leray-Hopf weak solution. It is also
relevant to observe that in many references (e.g. as in~\cite{Tem1977b,QV1994}) authors
focus on the order of the convergence between the numerical and continuous solution in the
$L^2$-norm. Nevertheless, in our case it is very relevant to obtain the uniform
$l^2(H^1_{\dive})$ bound on the numerical solution, since this is requested to show
convergence towards a weak solution, considered in the genuine sense of Leray and
Hopf. This explains the limitations on $\theta$, which in this paper are not due to
classical stability issues. Hence, in the proof of the main result also the Step 1 (that
of proving that the numerical solutions converge to Leray-Hopf weak solutions) is rather
original, or at least we did not find this explicitly proved in any reference (For
instance in~\cite{Tem1977b} a partial analysis of this point, valid only for certain
schemes, is provided). On the other hand, from the equations~\eqref{eq:introcont} is
possible to prove some mild time regularity on $v^{\Delta t}_{h}$; this will enough to
ensure that the product $v^{\Delta t}_{h}\,u^{\Delta t}_{h}$ is weakly convergent in
$L^{1}(\tore)$ to $v\,u$, where $v$ and $u$ are the weak limits of $v^{\Delta t}_{h}$ and
$u^{\Delta t}_{h}$, respectively. This is the technical point where results \textit{\`a
  la} compensated compactness are used. Finally, this additional information combined
again with discrete energy inequality allows us to infer that $u=v$ and that $u^{\Delta
  t}_{h}$ is strongly convergent in $L^{2}((0,T)\times\tore)$.

\smallskip

\textbf{Plan of the paper.}  In Section~\ref{sec:pre} we fix the notation that we use in
the paper and we recall the main definitions and tools used. In Section~\ref{sec:dis} we
introduce and give some details about the space-time discretization methods. Finally, in
Section~\ref{sec:apriori} we prove the main {\em a priori} estimates needed to study the
convergence and finally in Section~\ref{sec:5} we prove Theorem~\ref{teo:main}.

\section{Notations and Preliminaries}
\label{sec:pre}
In this section we declare the notation we will use in the paper, we recall the main
definitions concerning weak solutions of incompressible Navier-Stokes equations and also a
compactness result.
\subsection{Notations}
We introduce the notations typical of space-periodic problems. The flat three-dimensional
torus $\tore$ is defined by $(\R/ 2\pi\Z)^3$. In the sequel we will use the customary
Lebesgue spaces $L^p(\tore)$ and Sobolev spaces $W^{k,p}(\tore)$ and we will denote their
norms by $\|\cdot\|_p$ and $\|\cdot\|_{W^{k,p}}$ We will not distinguish between scalar
and vector valued functions, since it will be clear from the context which one has to be
considered. In the case $p=2$, the $L^{2}(\tore)$ scalar product is denoted by
$\left(\cdot,\cdot\right)$, we use the notation $H^s(\tore):=W^{s,2}(\tore)$ and we
define, for $s>0$, the dual spaces $H^{-s}(\tore)=(H^s(\tore))'$.  Moreover, we will
consider always subspaces of functions with zero mean value and these will be denoted by
\begin{equation*}
    L_{\#}^{p}:= \left\{w\in L^{p}(\tore)
    \quad\int_{\tore}w\,dx = {0} \right\}\qquad 1\leq p<+\infty,
\end{equation*}
and also 
\begin{equation*}
  H_{\#}^{s}:= H^{s}(\tore)\cap L^{2}_{\#}.
\end{equation*}

As usual in fluid mechanics one has to consider spaces of divergence-free vector fields,
defined as follows
\begin{equation*}
  L_{\dive}^{2}:= \left\{w \in
    (L^{2}_{\#})^{3}: \quad\dive w = 0\right\}
  \qquad\text{and}\qquad
  H_{\dive}^{s} := (H^{s}_{\#})^3\cap L_{\dive}^{2}.
\end{equation*}
Finally, given a Banach space $(X,\|\,.\,\|_X)$, we denote by $L^p(0,T;X)$ the classical
Bochner space of $X$-valued functions, endowed with its natural norm, denoted by
$\|\cdot\|_{L^{p}(X)}$. We denote by $l^p(X)$ the discrete counterpart for $X$-valued
sequences $\{x^m\}_{m}$, defined on the net $\{m\Delta t\}$, with weighted norm given by
$\|x\|_{l^{p}(X)}^p:=\Delta t\sum_{m=0}^M\|x^m\|_X^p$.
\subsection{Weak solutions and suitable weak solutions}
We start by recalling the notion of weak solution (as introduced by Leray and Hopf) for
the space periodic setting.
\begin{definition}
  The vector field $u$ is a Leray-Hopf weak solution of~\eqref{eq:nse}-\eqref{eq:nsid} if
  \begin{equation*}
    u\in L^{\infty}(0,T;L_{\dive}^{2})\cap L^{2}(0,T;H^{1}_{\dive}),
  \end{equation*}
  and if $u$ satisfies the Navier-Stokes equations~\eqref{eq:nse}-\eqref{eq:nsid} in the
  weak sense, namely the integral equality
  \begin{equation}
    \label{eq:nsw}
    \int_0^{T}\big[ \left(u,\partial_t\phi\right)-\left(\nabla
      u,\nabla\phi\right)-\left((u\cdot\nabla)\,u,\phi\right)\big]\,dt+\left(u_0,\phi(0)\right)=0,  
  \end{equation}
  holds true for all $\phi\in
  C_c^{\infty}([0,T);C^{\infty}(\tore)^3)$ smooth, periodic, and divergence-free functions  such that $\int_{\tore}\phi\,dx=0$.  Moreover,
  the initial datum is attained in the strong $L^{2}$-sense, that is
  \begin{equation*}
    \lim_{t\to 0^{+}}\|u(t)-u_0\|_{2}=0,
  \end{equation*}
  and the following \textit{global} energy inequality holds
  \begin{equation}\label{eq:gei}
    \frac12\|u(t)\|_{2}^2+\int_0^t\|\nabla u(s)\|_{2}^2\,ds\leq\frac12
    \|u_0\|_{2}^2,\quad\textrm{ for all } t\in[0,T].
  \end{equation}
\end{definition}
Suitable weak solutions are a particular subclass of Leray-Hopf weak solutions and the
definition is the following.
\begin{definition}
  \label{def:suit}
  A pair $(u,p)$ is a suitable weak solution to the Navier-Stokes equation~\eqref{eq:nse}
  if $u$ is a Leray-Hopf weak solution, $p\in L^{\frac{4}{3}}(0,T;L^{2}_{\#})$, and the local
  energy inequality
\begin{equation}
  \label{eq:lei}
  \int_0^T\int_{\tore}|\nabla u|^2\phi\,dx dt\leq
  \int_{0}^{T}\int_{\tore}\left[\frac{|u|^2}{2}\left(\partial_t\phi+\Delta\phi\right)
+\left(\frac{|u|^2}{2}+p\right)u\cdot\nabla \phi\right]\,dx dt.
\end{equation}
holds for all smooth scalar functions $\phi\in C^\infty_0(0,T;C^\infty(\tore))$ such that
$\phi\geq0$,
\end{definition}
\begin{remark}
  The definition of suitable weak solution is usually stated with $p\in
  L^{\frac{5}{3}}((0,T)\times\tore)$ while in Definition~\ref{def:suit} we require just
  $p\in L^\frac{4}{3}(0,T;L^{2}(\tore))$. This is not an issue since we have a bit less
  integrability in time but we gain the $L^{2}$-integrability in the space variables. We
  stress that the main property of suitable weak solutions is the fact that they satisfy
  the local energy inequality~\eqref{eq:lei} and weakening the request on pressure does
  not influence too much the validity of local regularity results, see for instance
  discussion in Vasseur~\cite{Vas2007}.
\end{remark}
\subsection{A compactness lemma}
In this subsection we recall the main compactness lemma, which allows us to prove the
strong convergence of the approximations. We remark that it is a particular case of a more
general lemma, whose statement and proof can be found in~\cite[Lemma 5.1]{PLL}. For sake of
completeness we give a proof adapted to the case we are interested in.
\begin{lemma}
  \label{lem:comp}
  Let $\{f_n\}_{n\in\N}$ and $\{g_n\}_{n\in\N}$ be  sequences uniformly bounded in
  $L^{\infty}(0,T;L^{2}(\tore))$ and let be given $f,g\in L^{\infty}(0,T;L^{2}(\tore))$
  such that
\begin{equation}
  \label{eq:lem1}
  \begin{aligned}
    &f_n\weakto f\qquad \textrm{ weakly in }L^{2}((0,T)\times\tore),
    \\
    &g_n\weakto g\qquad \textrm{ weakly in } L^{2}((0,T)\times\tore).
\end{aligned}
\end{equation}
Let $p\geq1$ and assume that 
\begin{equation}
  \label{eq:asslem}
  \begin{aligned}
    &\{\partial_t f_n\}_n\subset L^{p}(0,T;H^{-1}(\tore))\quad \text{and}\quad&\{g_n\}_{n}\subset
    L^{2}(0,T;H^{1}(\tore)),
  \end{aligned}
\end{equation}
with uniform (with respect to $n\in\N$) bounds on the norms. Then,
\begin{equation}
  \label{eq:conc}
  f_n\,g_n\weakto f\,g\qquad \textrm{ weakly in }L^{1}((0,T)\times\tore). 
\end{equation}
\end{lemma}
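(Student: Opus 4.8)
The plan is to prove the convergence of products $f_n g_n$ by combining the weak convergences with a compactness-in-time argument for $f_n$ and the spatial regularity of $g_n$, via a duality/commutator trick with spatial mollification. First I would fix a standard spatial mollifier $\rho_\delta$ on the torus $\tore$ (convolution in the space variable only) and, for a test function $\psi\in C_c^\infty((0,T)\times\tore)$, decompose
\begin{equation*}
  \int_0^T\!\!\int_{\tore} f_n g_n \psi \,dx\,dt
  = \int_0^T\!\!\int_{\tore} (f_n)_\delta\, g_n \psi \,dx\,dt
  + \int_0^T\!\!\int_{\tore} (f_n - (f_n)_\delta)\, g_n \psi \,dx\,dt,
\end{equation*}
where $(f_n)_\delta := \rho_\delta * f_n$. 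The second term is controlled uniformly in $n$: since $\|f_n - (f_n)_\delta\|_{L^2(0,T;H^{-1})}\le C\delta\,\|f_n\|_{L^2(0,T;L^2)}$ by the standard mollification estimate on $H^{-1}$, and $g_n\psi$ is bounded in $L^2(0,T;H^1)$ uniformly in $n$ (using the uniform bound on $g_n$ in $L^2(0,T;H^1)$ and smoothness of $\psi$), the pairing is bounded by $C\delta$, hence negligible as $\delta\to0$, uniformly in $n$. The same estimate applies with $f_n$ replaced by $f$, so it suffices to show that for each fixed $\delta>0$,
\begin{equation*}
  \int_0^T\!\!\int_{\tore} (f_n)_\delta\, g_n \psi \,dx\,dt \longrightarrow \int_0^T\!\!\int_{\tore} f_\delta\, g\, \psi \,dx\,dt \qquad\text{as } n\to\infty.
\end{equation*}

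For the fixed-$\delta$ convergence, the key point is that $\{(f_n)_\delta\}_n$ is relatively compact in $C([0,T];L^2(\tore))$, or at least strongly convergent in $L^2((0,T)\times\tore)$. Indeed, $(f_n)_\delta$ is bounded in $L^\infty(0,T;H^k)$ for every $k$ (mollification gains arbitrary spatial regularity, uniformly in $n$, from the $L^\infty(0,T;L^2)$ bound), while $\partial_t (f_n)_\delta = \rho_\delta * \partial_t f_n$ is bounded in $L^p(0,T;H^{k})$ for every $k$ (mollifying the $H^{-1}$ bound on $\partial_t f_n$). By the Aubin–Lions–Simon lemma, $\{(f_n)_\delta\}_n$ is relatively compact in, say, $C([0,T];L^2(\tore))$; since $(f_n)_\delta\weakto f_\delta$ in $L^2((0,T)\times\tore)$ (immediate from $f_n\weakto f$ and continuity of convolution), the full sequence converges strongly, $(f_n)_\delta \to f_\delta$ in $L^2((0,T)\times\tore)$. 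Then the product $(f_n)_\delta\, g_n \weakto f_\delta\, g$ in $L^1$, tested against the bounded function $\psi$, follows from the elementary fact that a strongly $L^2$-convergent sequence times a weakly $L^2$-convergent sequence converges weakly in $L^1$ (write $(f_n)_\delta g_n \psi = (f_\delta) g_n\psi + ((f_n)_\delta - f_\delta) g_n \psi$; the first term converges by weak convergence of $g_n$ against $f_\delta\psi\in L^2$, the second is bounded by $\|(f_n)_\delta - f_\delta\|_{L^2}\,\|g_n\|_{L^2}\,\|\psi\|_\infty\to0$).

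Finally I would assemble the three pieces with an $\varepsilon/3$ argument: given $\psi$ and $\eta>0$, choose $\delta$ small enough that both error terms $\int (f_n - (f_n)_\delta)g_n\psi$ and $\int (f - f_\delta)g\psi$ are below $\eta/3$ uniformly in $n$, then let $n\to\infty$ in the main term; this yields $\int f_n g_n \psi \to \int f g\psi$ for all $\psi\in C_c^\infty$, and since $\{f_n g_n\}_n$ is bounded in $L^1((0,T)\times\tore)$ (product of two $L^2$-bounded sequences) — more than that, it is bounded in $L^{p'}(0,T;L^1)$-type spaces but boundedness in $L^1$ together with the density argument suffices — a standard density argument upgrades this to the claimed weak convergence in $L^1((0,T)\times\tore)$ against all $L^\infty$ test functions. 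The main obstacle, and the only genuinely delicate point, is the uniform-in-$n$ control of the mollification error: one must be sure that the correct norm on $f_n - (f_n)_\delta$ is the $L^2(0,T;H^{-1})$ norm (so that the $\partial_t f_n$ bound is what is used, not a bound on $f_n$ itself in a stronger space that is unavailable), and pair it with the spatial $H^1$-regularity of $g_n\psi$; getting this duality pairing set up correctly is the crux, after which everything else is routine compactness and bookkeeping.
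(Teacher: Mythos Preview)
Your argument is correct, but it is considerably more elaborate than the paper's, which avoids spatial mollification entirely. The paper observes that the hypotheses already yield strong convergence of $f_n$ itself in a weak enough space: since $L^{2}(\tore)\hookrightarrow H^{-1}(\tore)$ compactly, the uniform bounds $f_n\in L^{\infty}(0,T;L^{2})$ and $\partial_t f_n\in L^{p}(0,T;H^{-1})$ give, via the Banach-space Arzel\`a--Ascoli/Aubin--Lions--Simon argument, $f_n\to f$ strongly in $C(0,T;H^{-1})$. Pairing this directly with the weak convergence $g_n\weakto g$ in $L^{2}(0,T;H^{1})$ yields $f_n g_n\to fg$ in the sense of distributions, and equi-integrability of $\{f_n g_n\}$ (from the $L^{\infty}_t L^{2}_x$ and $L^{2}_t H^{1}_x$ bounds) upgrades this to weak $L^{1}$. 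Your mollification step is doing by hand what this compactness already delivers: you manufacture a strongly convergent approximation $(f_n)_\delta$ and control the remainder via the same $H^{-1}$--$H^{1}$ duality, whereas the paper recognises that $f_n$ is already strongly convergent in $C(0,T;H^{-1})$ without any smoothing.

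One minor point: in your closing paragraph you write that ``boundedness in $L^{1}$ together with the density argument suffices'' to pass from $C_c^\infty$ test functions to $L^\infty$ ones. That is not true in general---$L^{1}$ boundedness alone does not give weak $L^{1}$ sequential compactness. You need equi-integrability (Dunford--Pettis), which you in fact have here (e.g.\ $f_n g_n$ is bounded in $L^{2}(0,T;L^{3/2})$ from $f_n\in L^{\infty}_t L^{2}_x$ and $g_n\in L^{2}_t L^{6}_x$), and you seem to be aware of this from your parenthetical remark; but the statement as written should be corrected. Also, your comment that the $\partial_t f_n$ bound is what drives the mollification error estimate is not quite right: that estimate uses only $f_n\in L^{\infty}_t L^{2}_x$, while the time-derivative bound enters solely in the Aubin--Lions step for $(f_n)_\delta$.
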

\begin{proof}
  By using~\eqref{eq:lem1},~\eqref{eq:asslem}, and the fact that $L^{2}(\tore)$ is
  compactly embedded in $H^{-1}(\tore)$ it follows from the Banach space version of
  Arzel\`a-Ascoli theorem that
  \begin{equation}
    \label{eq:lem3}
    f_n\to f\qquad \textrm{ strongly in }C(0,T;H^{-1}(\tore)). 
  \end{equation}
  From~\eqref{eq:asslem} it follows that
  \begin{equation}
    \label{eq:lem4}
    g_n\weakto g\qquad \textrm{ weakly in }L^2(0,T;H^{1}(\tore)). 
  \end{equation}
  Then,~\eqref{eq:lem3} and~\eqref{eq:lem4} easily imply that
  \begin{equation*}
    f_n\,g_n\rightarrow f\,g\qquad \textrm{ in the sense of distribution on }(0,T)\times\tore.
\end{equation*}
The $L^{1}$-weak convergence in~\eqref{eq:conc} follows by noting that the bounds
in~\eqref{eq:asslem} imply that the sequence $\{f_n\,g_n\}_{n}$ is equi-integrable.
\end{proof}
\section{Setting of the numerical approximation}
\label{sec:dis}
In this section we introduce the time and space discretization of the initial value
problem~\eqref{eq:nse}-\eqref{eq:nsid}. We start by introducing the space discretization.
\subsection{Space discretization}
\label{sec:space discretization}
For the space discretization we strictly follow the setting considered in~\cite{Gue2006}.
Let $\{X_h\}_{h>0}\subset H_{\#}^{1}$ be the discrete space for approximate velocity and
$\{M_h\}_{h>0}$ $\subset L^{2}_{\#}$ be that of approximate pressure. To avoid further
technicalities, we assume as in~\cite{Gue2006}, that $M_h\subset H^{1}_{\#}$. 

We make the following (technical) assumptions on the spaces $X_h$ and $M_h$:
\begin{enumerate}
\item For any $v\in H^{1}_{\#}$ and for any $q\in L^{2}_{\#}$ there exists
  $\{v_h\}_{h}$ and $\{q_{h}\}_{h}$ with $v_h\in X_h$ and $q_h\in M_h$ such that
  \begin{equation}
    \label{eq:apptest}
    \begin{aligned}
      &v_h\to v\qquad\textrm{ strongly in }H^{1}_{\#}\quad\textrm{ as }h\to 0,
      \\
      &q_h\to q\qquad\textrm{ strongly in }L^2_{\#}\quad\textrm{ as }h\to 0;
    \end{aligned}
  \end{equation}
\item Let $\pi_h : L^2(\tore)\to X_h$ be the $L^2-$projection onto $X_h$. Then, there
  exists $c>0$ independent of $h$ such that,
\begin{equation}
  \label{coer}
  \|\pi_h\left(\nabla q_h\right)\|_{2}\geq c\,\|q_h\|_{2}\qquad \forall\, q_h \in M_h;
\end{equation}
\item  There is $c$, independent of $h$, such that for all $v\in H_{\#}^{1}$
  \begin{equation*}
    \begin{aligned}
      &\|v-\pi_h(v)\|_{2}=\inf_{w_h\in X_h}\|v-w_h\|_{2}\leq c\,h\|v\|_{H^1},
      \\
      &\|\pi_h(v)\|_{H^1}\leq c\,\|v\|_{H^1};
    \end{aligned}
  \end{equation*}
\item There exists $c$, independent of $h$, such that
\begin{equation}
  \label{eq:imm}
  \|v_h\|_{H^1}\leq c\,h^{-1}\|v_h\|_{2} \qquad \forall\, v_h\in X_h.
\end{equation} 
\end{enumerate}
Moreover, we assume that $X_h$ and $M_h$ satisfy the following discrete commutator
property.
\begin{definition}
  \label{def:dcp}
  We say that $X_h$ (resp. $M_h$) has the discrete commutator property if there exists an
  operator $P_h\in \mathcal{L}(H^1;X_h)$ (resp. $Q_h\in \mathcal{L}(L^2;M_h)$) such that
  for all $\phi \in W^{2,\infty}$ (resp. $\phi \in W^{1,\infty}$) and all $v_h\in X_h$
  (resp. $q_h\in M_h$)
\begin{align}
  & \|v_h\phi - P_h(v_h\phi)\|_{H^l}\leq
  c\,h^{1+m-l}\|v_h\|_{H^m}\|\phi\|_{W^{m+1,\infty}},\label{eq:dcp1}
  \\
  & \|q_h\phi - Q_h(q_h\phi)\|_{2}\leq
  c\,h\|q_h\|_{2}\|\phi\|_{W^{1,\infty}}\label{eq:dcp2},
\end{align}
for all $0\leq l \leq m\leq 1$.
\end{definition}
\begin{remark}
  \label{rem:example}
  Explicit examples of admissible couples of finite element spaces satisfying all the
  above requirements are the MINI and Hood-Taylor elements, with quasi-uniform mesh and in
  the periodic setting, see~\cite{Gue2006}. Probably more general meshes could be considered
  by using the results in~\cite{BPS2001} and references therein.
\end{remark}
\begin{remark}
  We want to stress that in the case of spectral methods, e.g., the Faedo-Galerkin methods based
  on Fourier expansion on the torus, the discrete commutator property fails. This is one
  of the main obstacles in proving the local energy inequality for weak solutions
  of~\eqref{eq:nse} constructed by the Fourier-Galerkin method.
\end{remark}
We recall (see also~\cite{Gue2006}) that the coercivity hypothesis~\eqref{coer} allows us to
define the map $\psi_h : H_{\#}^2 \to M_h$ such that, for all $q\in H_{\#}^2$, the
function $\psi_h(q)$ is the unique solution to the variational problem:
\begin{equation}
  \label{eq:preintro}
  \left(\pi_h(\nabla \psi_h(q)),\nabla r_h\right)=\left(\nabla q,\nabla r_h\right).
\end{equation} 
This map has the following properties: there exists $c$ independent of $h$ such that for
all $q\in H_{\#}^2$,
\begin{align}
  & \nonumber\|\nabla (\psi_h(q)-q)\|_{2}\leq c\,h\|q\|_{H^2},
  \\
  &  \|\pi_h\nabla \psi_h(q)\|_{H^1}\leq c\,\|q\|_{H^2}.\label{eq:psi2}
\end{align}
Let us introduce the discrete divergence-free finite element spaces as follows
\begin{equation*}
  V_h=\left\{ v_h\in X_h : \left(\dive v_h,q_h\right)=0 \qquad \forall q_h\in L^2(\Omega)\right\}.
\end{equation*}
To have the basic energy estimate we need to modify the non-linear term, since $V_h$ is not
a subspace of $H^{1}_{\dive}$. Let us define the following 
\begin{equation}
  \label{eq:nonlinear}
  nl_h(u,v):=(u\cdot\nabla)\, v+ \frac{1}{2}v\dive u. 
\end{equation}
Then, $nl_h(\,.\,,\,.\,)$ is a bi-linear operator
\begin{equation*}
  nl_h: H^{1}_{\#}\times H^{1}_{\#}\rightarrow H^{-1},
\end{equation*}
where $H^{-1}:=(H^{1}_{\#})'$. Moreover, the following estimate holds true
\begin{equation}
  \label{eq:nonlinear1}
  \|nl_h(u,v)\|_{H^{-1}}\leq \|u\|_{L^3}\|v\|_{H^1}\quad \forall\,u,v\in H^{1}_{\#}.
\end{equation}
Finally, by defining $b_h(u,v,w):=\langle nl_h(u,v), w\rangle_{H^{-1}\times H_{\#}^1}$, it
follows that
\begin{equation*}
  b_h(u,v,v)=0\qquad \forall\,u,\,v\in H^{1}_{\dive}+V_h.
\end{equation*}
Then, the space discretization of~\eqref{eq:nse}-\eqref{eq:nsid} reads as follows:
\\
Find $u_h\in C(0,T;X_h)$ with $\partial_t u_h\in L^2(0,T;X_h)$ and $p_h\in L^2(0,T;M_h)$
such that, for all $v_h\in X_h$ and $q_h\in M_h$:
\begin{equation}
  \label{eq:galerkin}
\begin{aligned}
  \left(\partial_t u_h,v_h\right)+b_h(u_h,u_h,v_h)-\left(p_h,\dive v_h\right)+\left(\nabla
    u_h,\nabla v_h\right)&=0,
  \\
  \left(\dive u_h,q_h\right)&=0,
\end{aligned}
\end{equation}
with the initial datum 
\begin{equation*}
u_h|_{t=0}=u^h_0,
\end{equation*}
where $u^h_0$ is an approximation of $u_0$ such that $u^h_0\in X_h$, and 
\begin{equation*}
  u^h_0\to u_0\qquad\textrm{ strongly in }L^{2}_{\dive}\quad \textrm{ as }h\to0.
\end{equation*}
\subsection{Time discretization}
For the time variable $t$ we define the mesh as follows: Given $N\in\N$ the time-step
$0<\Delta t\leq T$ is defined as $\Delta t:=T/N$. Accordingly, we define the corresponding
net $\{t_m\}_{m=1}^{N}$ by
\begin{equation*}
  t_0:=0\,\qquad \text{and}\qquad t_m:=m\,\Delta t,\qquad m=1,\dots,N.
\end{equation*}
To discretize in time the semi-discrete
problem~\eqref{eq:galerkin} we consider the following $\theta$-method
(cf.~\cite[\S~5.6.2]{QV1994}) for
$\theta\in\left[0,1\right]$:

Set $u_h^0=u_0\in H^{1}_{\dive}$. For any $m=1,...,N$, given $u_h^{m-1}\in X_h$ and
$p_h^{m-1}\in M_{h}$ find $u_{h}^m\in X_h$ and $p_h^m\in M_{h}$ such that
\begin{equation}
  \label{eq:theta1}
\begin{aligned}
  (d_t u_h^m,v_h)+(\nabla u_{h}^{m,\theta},\nabla v_h)+b_h(u_{h}^{m,\theta},
  u_{h}^{m,\theta},v_h)-( p_{h,}^{m},\dive v_h)&=0,
  \\
  ( \dive u_{h}^{m},q_h)&=0,
\end{aligned}
\end{equation}
for all $v_h\in X_h$ and for all $q_h\in M_h$. We recall that here $d_t u^m$ is the backward
finite-difference approximation for the time-derivative in the interval $(t_{m-1},t_m)$
\begin{equation*}
 \partial _t u_h\sim d_t u^m:=\frac{u_h^m-u_h^{m-1}}{\Delta t},
\end{equation*}
and $u_{h}^{m,\theta}:=\theta u_h^m+(1-\theta)u_h^{m-1}$ is a convex combination of
$u_{h}^{m}$ and $u_{h}^{m+1}$. With a slight abuse of notation we consider $\Delta t=T/N$
and $h$, instead of $(N,h)$, as the indexes of the sequences for which we prove the
convergence. Then, the convergence will be proved in the limit of $(\Delta t,h)$ both
going to zero. We stress that this does not affect the proofs since all the convergences
are proved up to sub-sequences.\par
Once~\eqref{eq:theta1} is solved, we consider a continuous version, which turns out to be
useful to study the convergence. To this end, we associate to the triple
$(\uth,u_{h}^{m},p^m_{h})$ the functions
\begin{equation*}
(v^{\Delta t}_{h},u^{\Delta t}_{h}, p^{\Delta t}_{h}):[0,T]\times\tore\rightarrow \R^3\times\R^3\times \R,
\end{equation*}
defined as follows:
 \begin{equation}
 \label{eq:vm}
 \begin{aligned}
   &v^{\Delta t}_{h}(t):=\left\{
     \begin{aligned}
       &u_{h}^{m-1}+\frac{t-t_{m-1}}{\Delta t}(u_{h}^{m}-u_{h}^{m-1})\ & \text{for
       } t\in[t_{m-1},t_m),
       \\
       &u_{h}^{N}\qquad &\text{for
       } t=t_N,
     \end{aligned}
   \right.
   \\
   &u^{\Delta t}_{h}(t):=\left\{
     \begin{aligned}
       &\uth & \text{for
       } t\in[t_{m-1},t_m),
       \\
       &u_{h}^{N,\theta}\qquad &\text{for
       } t=t_N,
     \end{aligned}
   \right.
   \\
   %
   &p^{\Delta t}_{h}(t):=\left\{
     \begin{aligned}
       &p_h^{m} & \text{for
       } t\in[t_{m-1},t_m),
       \\
       &p_{h}^{N}\qquad &\text{for
       } t=t_N.
     \end{aligned}
   \right.
\end{aligned}
\end{equation}
Then, the discrete equations~\eqref{eq:theta1} can be rephrased as as follows:
\begin{equation}
  \label{eq:thet1ac}
  \begin{aligned}
    \left(\partial_t v^{\Delta t}_{h},w_h\right)+b_h\left(u^{\Delta t}_{h},u^{\Delta
        t}_{h},w_h\right)+\left(\nabla u^{\Delta t}_{h},\nabla w_h\right)-\left(p^{\Delta
        t}_{h},\dive w_h\right)&=0,
    \\
    \left(\dive u^{\Delta t}_{h}, q_h\right)&=0,
  \end{aligned}
\end{equation}
for all $w_h\in L^{s}(0,T;X_h)$ (with $s\geq4)$ and for all $q_h\in L^{2}(0,T;M_h)$. We
notice that the divergence-free condition comes from the fact that $u^{m}_{h}$ is such
that
\begin{equation*}
  \left(\dive u^m_h, q_{h}\right)=0\qquad\text{ for }m=1,...,N,\quad  \forall\,q_{h}\in M_{h}.
\end{equation*}
%
\section{A priori estimates}
\label{sec:apriori}
In this section we prove the {\em a priori} estimates that we need to study the
convergence of solutions of~\eqref{eq:thet1ac} to suitable weak solutions
of~\eqref{eq:nse}-\eqref{eq:nsid}. We start with the following discrete energy equality.
\begin{lemma}
  
\label{lem:discene}
For any $1/2<\theta\leq 1$, $N\in\N$, and $m=1,..,N$ the following discrete energy-type
equality holds true:
\begin{align}
  \label{eq:a}
  &
  \frac{1}{2}(\|u_h^m\|_{2}^2-\|u_h^{m-1}\|_{2}^2)+\frac{(2\theta-1)}{2}\|u_h^m-u_h^{m-1}\|_{2}^2+\Delta
  t\,\|\nabla u_{h}^{m,\theta}\|_{2}^2=0.
\end{align}
\end{lemma}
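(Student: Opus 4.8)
The plan is to derive the discrete energy identity directly from the scheme~\eqref{eq:theta1} by choosing the test function $v_h = u_h^{m,\theta}\in X_h$ (which is an admissible test function, being a convex combination of elements of $X_h$) in the momentum equation. With this choice the four terms must be handled as follows. First, the nonlinear term vanishes: since $u_h^{m,\theta}\in V_h$ (being a convex combination of divergence-free discrete velocities $u_h^m, u_h^{m-1}$, in the sense that $(\dive u_h^{j},q_h)=0$ for all $q_h\in M_h$), and $b_h(u,v,v)=0$ for all $v\in H^1_{\dive}+V_h$, we get $b_h(u_h^{m,\theta},u_h^{m,\theta},u_h^{m,\theta})=0$. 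This is precisely the point emphasized in the introduction: having $u_h^{m,\theta}$ in the \emph{second} slot of the trilinear form is what makes this cancellation work. Second, the pressure term vanishes: $(p_h^m,\dive u_h^{m,\theta})=\theta(p_h^m,\dive u_h^m)+(1-\theta)(p_h^m,\dive u_h^{m-1})=0$ since $p_h^m\in M_h$ and the discrete divergence-free constraint holds at each time level. Third, the viscous term gives exactly $(\nabla u_h^{m,\theta},\nabla u_h^{m,\theta})=\|\nabla u_h^{m,\theta}\|_2^2$.

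The only remaining term is $(d_t u_h^m, u_h^{m,\theta})$, and this is where the algebraic identity at the heart of the $\theta$-method enters. I would write
\begin{equation*}
  (d_t u_h^m, u_h^{m,\theta}) = \frac{1}{\Delta t}\bigl(u_h^m-u_h^{m-1},\,\theta u_h^m+(1-\theta)u_h^{m-1}\bigr).
\end{equation*}
Expanding the inner product using the polarization-type identities $2(a-b,a)=\|a\|_2^2-\|b\|_2^2+\|a-b\|_2^2$ and $2(a-b,b)=\|a\|_2^2-\|b\|_2^2-\|a-b\|_2^2$ with $a=u_h^m$, $b=u_h^{m-1}$, one obtains
\begin{equation*}
  (u_h^m-u_h^{m-1},\,\theta u_h^m+(1-\theta)u_h^{m-1}) = \frac{1}{2}\bigl(\|u_h^m\|_2^2-\|u_h^{m-1}\|_2^2\bigr) + \frac{2\theta-1}{2}\|u_h^m-u_h^{m-1}\|_2^2.
\end{equation*}
Multiplying through by $\Delta t$ and collecting all four contributions yields exactly~\eqref{eq:a}. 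I would remark that the coefficient $2\theta-1$ is nonnegative precisely when $\theta\geq 1/2$, so for $\theta\in(1/2,1]$ this term is a genuine (strictly positive when $\theta>1/2$) numerical dissipation, which is what will later be exploited; for $\theta=1/2$ it vanishes and one recovers the energy-conserving Crank–Nicolson structure, while the hypothesis $\theta>1/2$ is what the paper actually uses downstream.

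I do not expect any serious obstacle here: this is a routine but important computation. The one point requiring a word of care is the justification that $u_h^{m,\theta}$ is a legitimate test function in both equations and that the discrete incompressibility constraint $(\dive u_h^{j},q_h)=0$ genuinely transfers to make both the pressure term and the nonlinear term vanish — i.e. that $u_h^{m,\theta}$ (and $u_h^m$, $u_h^{m-1}$ individually) lie in $V_h$ so that the property $b_h(u,v,v)=0$ on $H^1_{\dive}+V_h$ applies. Since $M_h\subset L^2(\tore)$ contains the relevant test pressures, and $V_h$ was defined using test functions $q_h\in L^2(\Omega)$, one should note that the constraint $(\dive u^m_h,q_h)=0$ holds for all $q_h\in M_h$ and that this is the sense in which $u_h^m\in V_h$ is used — this is the only place where one must be slightly attentive to which space of test pressures is being invoked. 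Everything else is the elementary polarization identity above.
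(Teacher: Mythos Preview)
Your proposal is correct and follows essentially the same approach as the paper: test the discrete scheme with $u_h^{m,\theta}$, use the cancellation $b_h(u_h^{m,\theta},u_h^{m,\theta},u_h^{m,\theta})=0$ and $(p_h^m,\dive u_h^{m,\theta})=0$, and expand the time-difference term via the polarization identity. The only cosmetic difference is that the paper phrases the test-function choice through the time-continuous reformulation~\eqref{eq:thet1ac} with $w_h=\chi_{[t_{m-1},t_m)}u_h^{m,\theta}$, which amounts to exactly the same computation.
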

\begin{proof}
  For any $m=1,...,N$ take $w_h=\chi_{[t_{m-1},t_m)}u_{h}^{m,\theta}\in
  L^{\infty}(0,T;X_h)$ in~\eqref{eq:thet1ac}. Then,
\begin{equation*}
  \left(\frac{u_h^m-u_h^{m-1}}{\Delta t},u_{h}^{m,\theta}\right)+\|\nabla u_{h}^{m,\theta}\|_{2}^2=0,
\end{equation*}
because since  $u_{h}^{m,\theta}\in X_{h}$ and $p^{m}_{h}\in M_{h}$ it follows that
\begin{equation*}
  b_h(u_{h}^{m,\theta}, u_{h}^{m,\theta}, u_{h}^{m,\theta})=0\qquad \text{and}\qquad (p_{h}^{m},\dive
  u_{h}^{m,\theta})=0. 
\end{equation*}
By using the elementary algebraic identity 
\begin{equation*}
  (a-b, a)=\frac{|a|^2}{2}-\frac{|b|^2}{2}+\frac{|a-b|^{2}}{2},
\end{equation*}
the term involving the discrete derivative reads as follows:
\begin{align*}
  (u_h^m-u_h^{m-1},u_{h}^{m,\theta})&=(u_h^m-u_h^{m-1},\theta u_h^m+(1-\theta)u_h^{m-1})
  \\
  & =\theta(u_h^m-u_h^{m-1},u_h^m)+(1-\theta)(u_h^{m-1}-u_h^m,u_h^{m-1})
  \\
  & =\frac{\theta}{2}(\|u_h^m\|_{2}^2-\|u_h^{m-1}\|^2+\|u_h^m-u_h^{m-1}\|_{2}^2)
  \\
  & -\frac{(1-\theta)}{2}(\|u_h^{m-1}\|_{2}^2-\|u_h^{m}\|^2+\|u_h^m-u_h^{m-1}\|_2^2)
  \\
  &
  =\frac{1}{2}(\|u_h^m\|_2^2-\|u_h^{m-1}\|_2^2)+\frac{(2\theta-1)}{2}\,\|u_h^m-u_h^{m-1}\|_2^2.
\end{align*}
Then, multiplying by $\Delta t>0$, Eq.~\eqref{eq:a} holds true. In addition, summing over $m$ we also
get
\begin{equation*}
  \frac{1}{2}\|u_h^N\|_2^2+\frac{(2\theta-1)}{2}\sum_{m=0}^N\|u_h^m-u_h^{m-1}\|_2^2+\Delta
  t\sum_{m=0}^N\|\nabla   u_{h}^{m,\theta}\|_2^2=  \frac{1}{2}\|u_h^0\|_2^2,
\end{equation*}
which proves the $l^\infty(L^2_{\#})\cap l^2(H^1_\#)$ uniform bound for the sequence $\{u^{m}_{h}\}$.
\end{proof}
\begin{remark}
  The requirement $\theta>1/2$ with strict inequality is not required for the proof of
  Lemma~\ref{lem:discene}. However, it is needed in order to deduce the crucial {\em a
    priori} estimates since it makes the coefficient of the second term from the left-hand
  side of~\eqref{eq:a} strictly positive. Moreover, since we actually need that term in
  the convergence proof to a weak solution, we cannot consider the endpoint case
  $\theta=1/2$.
\end{remark}
The next lemma concerns the regularity of the pressure. We follow the argument
in~\cite{Gue2006} and we notice that we are essentially solving the standard discrete
Poisson problem associated to the pressure.
\begin{lemma}
  \label{lem:pres}
  There exists a constant $c>0$, independent of $\Delta t$ and of $h$, but eventually
  depending on $\theta$, such that
  \begin{equation}
  \label{eq:pressure}
  \begin{aligned}
    \|\pth\|_{2} &\leq
    c\left(\|\uth\|_{H^1}+\|u_h^{m,\theta}\|_{L^3}\|u_h^{m,\theta}\|_{H^1}\right)\qquad
    \text{for }m=1,\dots,N.
  \end{aligned}
\end{equation}
\end{lemma}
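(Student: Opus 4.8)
The plan is to test the first equation in~\eqref{eq:theta1} with an element of $X_h$ built from the pressure, exploiting the coercivity hypothesis~\eqref{coer} to recover control of $\|p_h^m\|_2$ from the other terms of the scheme. Concretely, fix $m$ and set $q_h := p_h^m \in M_h$. By~\eqref{coer} we have $\|p_h^m\|_2 \le c^{-1}\|\pi_h(\nabla p_h^m)\|_2$, so it suffices to bound $\|\pi_h(\nabla p_h^m)\|_2$. Since $\pi_h(\nabla p_h^m)\in X_h$, we may take $v_h = \pi_h(\nabla p_h^m)$ as test function in the momentum equation~\eqref{eq:theta1}. The pressure term becomes $-(p_h^m,\dive\,\pi_h(\nabla p_h^m))$; here I would integrate by parts (periodicity, no boundary terms) to rewrite it as $(\nabla p_h^m,\pi_h(\nabla p_h^m))$, and then use that $\pi_h$ is the $L^2$-orthogonal projection onto $X_h$ together with $\pi_h(\nabla p_h^m)\in X_h$ to conclude $(\nabla p_h^m,\pi_h(\nabla p_h^m)) = (\pi_h(\nabla p_h^m),\pi_h(\nabla p_h^m)) = \|\pi_h(\nabla p_h^m)\|_2^2$. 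This is the mechanism that produces the quantity we want to estimate on the left-hand side.

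Next I would estimate the remaining three terms, each tested against $v_h = \pi_h(\nabla p_h^m)$, and absorb a factor $\|\pi_h(\nabla p_h^m)\|_2$ on the right. For the viscous term, $|(\nabla u_h^{m,\theta},\nabla\pi_h(\nabla p_h^m))| \le \|\nabla u_h^{m,\theta}\|_2\,\|\nabla\pi_h(\nabla p_h^m)\|_2 \le \|u_h^{m,\theta}\|_{H^1}\,\|\pi_h(\nabla p_h^m)\|_{H^1}$; for the nonlinear term, using~\eqref{eq:nonlinear1} (which gives $\|nl_h(u_h^{m,\theta},u_h^{m,\theta})\|_{H^{-1}}\le \|u_h^{m,\theta}\|_{L^3}\|u_h^{m,\theta}\|_{H^1}$) together with the definition of $b_h$ as the duality pairing, $|b_h(u_h^{m,\theta},u_h^{m,\theta},\pi_h(\nabla p_h^m))| \le \|u_h^{m,\theta}\|_{L^3}\|u_h^{m,\theta}\|_{H^1}\,\|\pi_h(\nabla p_h^m)\|_{H^1}$; and for the discrete-time-derivative term, $|(d_t u_h^m,\pi_h(\nabla p_h^m))| \le \|d_t u_h^m\|_2\,\|\pi_h(\nabla p_h^m)\|_2$. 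The recurring factor $\|\pi_h(\nabla p_h^m)\|_{H^1}$ is the obstruction: it is a full $H^1$-norm, not the $L^2$-norm we started with, and it cannot be absorbed into $\|\pi_h(\nabla p_h^m)\|_2^2$ directly.

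Here is where I expect the main subtlety, and the reason the lemma is phrased the way it is. The map $q \mapsto \pi_h(\nabla\psi_h(q))$ introduced in~\eqref{eq:preintro} satisfies the $H^1$-stability bound~\eqref{eq:psi2}, $\|\pi_h\nabla\psi_h(q)\|_{H^1}\le c\|q\|_{H^2}$, but applying it requires replacing the naive test function $\pi_h(\nabla p_h^m)$ by $\pi_h(\nabla\psi_h(p_h^m))$ — recalling that $\psi_h(p_h^m)$ is designed precisely so that $(\pi_h(\nabla\psi_h(p_h^m)),\nabla r_h) = (\nabla p_h^m,\nabla r_h)$ for all $r_h\in M_h$, and in particular the pressure term still reproduces $\|\pi_h(\nabla\psi_h(p_h^m))\|_2^2$ while~\eqref{coer} and~\eqref{eq:psi2} combine to give $\|\pi_h(\nabla\psi_h(p_h^m))\|_{H^1}\le c\|p_h^m\|_{H^2}$... which reintroduces an $H^2$-norm of $p_h^m$. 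To close the loop one uses the discrete context: $p_h^m\in M_h$ is finite-dimensional, but rather than invoking the inverse inequality (which would cost powers of $h$), the correct route is to observe that testing the momentum equation with $\pi_h(\nabla\psi_h(p_h^m))$ and using~\eqref{eq:preintro} gives $\|\pi_h(\nabla\psi_h(p_h^m))\|_2^2$ bounded by $\big(\|d_tu_h^m\|_2 + \|u_h^{m,\theta}\|_{H^1} + \|u_h^{m,\theta}\|_{L^3}\|u_h^{m,\theta}\|_{H^1}\big)\,\|\pi_h(\nabla\psi_h(p_h^m))\|_{H^1}$, and then one needs an $h$-uniform bound $\|\pi_h(\nabla\psi_h(p_h^m))\|_{H^1}\le c\,\|\pi_h(\nabla\psi_h(p_h^m))\|_2$, i.e. an equivalence of norms on the relevant finite-dimensional subspace that follows from the structure of $\psi_h$ and~\eqref{coer} together with the $H^1$-bound in assumption~(3). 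Granting this, one divides through by $\|\pi_h(\nabla\psi_h(p_h^m))\|_2$, applies~\eqref{coer} in the form $c\|p_h^m\|_2\le\|\pi_h(\nabla\psi_h(p_h^m))\|_2$ (valid since $\psi_h(p_h^m)$ satisfies~\eqref{eq:preintro}), and arrives at
\begin{equation*}
  \|p_h^m\|_2 \le c\big(\|d_t u_h^m\|_2 + \|u_h^{m,\theta}\|_{H^1} + \|u_h^{m,\theta}\|_{L^3}\|u_h^{m,\theta}\|_{H^1}\big).
\end{equation*}
The stated estimate~\eqref{eq:pressure} omits the $\|d_tu_h^m\|_2$ term, which suggests the authors either absorb it using the scheme itself (the momentum equation expresses $d_tu_h^m$, projected suitably, in terms of the other quantities) or it is controlled at a later stage; in the write-up I would keep track of it explicitly and match whatever cancellation~\cite{Gue2006} uses. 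The single hardest point is the $h$-independent passage from the $H^1$-norm to the $L^2$-norm of $\pi_h(\nabla\psi_h(p_h^m))$ — everything else is Cauchy–Schwarz, Hölder, and bookkeeping.
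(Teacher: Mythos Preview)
Your proposal contains a genuine gap at exactly the point you identify as ``the single hardest.'' The $h$-uniform bound $\|\pi_h(\nabla\psi_h(p_h^m))\|_{H^1}\le c\,\|\pi_h(\nabla\psi_h(p_h^m))\|_2$ that you hope for is an inverse inequality and is \emph{false} uniformly in $h$; it cannot be extracted from~\eqref{coer} or assumption~(3). The paper closes this loop by a different mechanism: rather than applying $\psi_h$ to $p_h^m$ (which need not lie in $H^2_\#$, the domain of $\psi_h$), it first solves the \emph{continuous} Poisson problem $(\nabla q^m,\nabla\phi)=(p_h^m,\phi)$ for $q^m\in H^2_\#$, so that elliptic regularity gives $\|q^m\|_{H^2}\le c\,\|p_h^m\|_2$, and \emph{then} takes $v_h=\pi_h\nabla\psi_h(q^m)$ as the test function. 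Now~\eqref{eq:psi2} yields $\|\pi_h\nabla\psi_h(q^m)\|_{H^1}\le c\,\|q^m\|_{H^2}\le c\,\|p_h^m\|_2$, which is precisely the missing $H^1$-to-$L^2$ passage. The pressure term still collapses, since by~\eqref{eq:preintro} and the definition of $q^m$ one has $-(p_h^m,\dive\pi_h\nabla\psi_h(q^m))=(\nabla p_h^m,\pi_h\nabla\psi_h(q^m))=(\nabla p_h^m,\nabla q^m)=(p_h^m,p_h^m)=\|p_h^m\|_2^2$.

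Your treatment of the discrete time-derivative is also incomplete. The term $(d_t u_h^m,\pi_h\nabla\psi_h(q^m))$ is not ``absorbed'' or ``controlled later''; it vanishes identically. Since $\pi_h$ is the $L^2$-projection onto $X_h$ and $u_h^m-u_h^{m-1}\in X_h$, one drops $\pi_h$ and integrates by parts to get $-(\dive(u_h^m-u_h^{m-1}),\psi_h(q^m))$, which is zero because $\psi_h(q^m)\in M_h$ and both $u_h^m,u_h^{m-1}$ are discretely divergence-free. This is why~\eqref{eq:pressure} carries no $\|d_t u_h^m\|_2$ contribution.
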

\begin{proof}
  Let $q^m\in H_{\#}^2$ be the unique solution of the following Poisson problem:
\begin{equation}
  \label{eq:pre00}
  (\nabla q^m,\nabla\phi)=(\pth, \phi)\qquad\forall\,\phi\in H^{1}_{\#}. 
\end{equation}
Standard elliptic estimates imply there exists an absolute constant $c>0$ such that
\begin{equation}
  \label{eq:pre11}
  \|q^m\|_{H^2}\leq c\,\|\pth\|_{2}.
\end{equation}
Let us consider $\pi_h\nabla(\psi_h(q^m))\in X_{h}$ as  test function
in~\eqref{eq:theta1}. Then, we get
\begin{equation*}
  \begin{aligned}
    &(d_t u^m,\pi_h\nabla(\psi_h(q^m)))+(\nabla u_{h}^{m,\theta},\nabla
    \pi_h\nabla(\psi_h(q^m)))
    \\
    &\qquad\qquad+b_h(u_{h}^{m,\theta}, u_{h}^{m,\theta},\pi_h\nabla(\psi_h(q^m)))
    -(p_{h}^{m},\dive \pi_h\nabla(\psi_h(q^m)))=0.
  \end{aligned}
\end{equation*}
First, by using~\eqref{eq:preintro} and~\eqref{eq:pre00} we prove  
\begin{align*}
   -\left(p_{h}^{m},\dive\pi_h\nabla(\psi_h(q^m))\right)&=\left(\nabla
    p_{h}^{m},\pi_h\nabla(\psi_h(q^m))\right)
   \\
  &=\left(\nabla p_{h}^{m},\nabla q^m\right)=\left(\pth,\pth\right)=\| p_{h}^{m}\|_{2}^2.
\end{align*}
Then, we get  
\begin{equation*}
  \begin{aligned}
    \|\pth\|_{2}^{2}=&-(\nabla u_{h}^{m,\theta},\nabla \pi_h\nabla(\psi_h(q^m)))-(d_t
    u^m,\pi_h\nabla(\psi_h(q^m)))
    \\
    &\qquad -b_h(u_{h}^{m}, u_{h}^{m},\pi_h\nabla(\psi_h(q^m))).
  \end{aligned}
\end{equation*}
By using~\eqref{eq:psi2} and~\eqref{eq:pre11} we have 
\begin{align*}
  \left|\left(\nabla u_{h}^{m,\theta},\nabla\pi_h\nabla(\psi_h(q^m))\right)\right|&\leq
  \|\nabla u_{h}^{m,\theta}\|_{2}\|\pi_h\nabla(\psi_h(q^m))\|_{H^1}
  \\
  & \leq C\|\nabla u_{h}^{m,\theta}\|_{2}\|q^m\|_{H^2}
  \\
  & \leq C\|\nabla u_{h}^{m,\theta}\|_{2}\|p_{h}^{m}\|_{2}.
\end{align*}
Concerning the term involving the discrete time-derivative we have 
\begin{align*}
  \left(u_h^m-u_h^{m-1},\pi_h\nabla(\psi_h(q^m))\right)&=\left(u_h^m-u_h^{m-1},\nabla(\psi_h(q^m))\right)
  \\
  & =-\left(\dive (u_h^m-u_h^{m-1}),\psi_h(q^m)\right)=0.
\end{align*}
Finally, regarding the non-linear term by using~\eqref{eq:nonlinear1} and~\eqref{eq:psi2}
we have
\begin{align*}
  |b_h(u_{h}^{m,\theta},u_{h}^{m,\theta},\pi_h\nabla(\psi_h(q^m)))|&\leq \left|\left\langle
      nl_h(u_{h}^{m,\theta},u_{h}^{m,\theta}),\pi_h\nabla(\psi_h(q^m))\right\rangle\right|
  \\
  & \leq C\| u_{h}^{m,\theta}\|_{L^3} \|u_{h}^{m,\theta}\|_{H^1} \| p_{h}^{m}\|_{2}.
\end{align*}
Then, by collecting all estimates, we get
\begin{equation*}
  \|\pth\|_{2}^{2}\leq c\left(\|u_{h}^{m,\theta}\|_{L^3}
    \|u_{h}^{m,\theta}\|_{H^1}+\|\uth\|_{H^1}\right)\|\pth\|_{2}, 
\end{equation*}
and~\eqref{eq:pressure} readily follows.
\end{proof}
We are now in position to prove the main a priori estimates on the approximate solutions
of~\eqref{eq:thet1ac}.
\begin{proposition}
  \label{prop:1}
  Let be given $u_0\in L^{2}_{\dive}$  and $1/2<\theta\leq 1$. Then, there exists
  a constant $c>0$, independent of $\Delta t$ and of $h$, such that
\begin{itemize}
\item[a)] $\|v^{\Delta t}_{h}\|_{L^\infty(L^2)}\leq c$,
\item[b)] $ \|u^{\Delta t}_{h}\|_{L^\infty(L^2)\cap L^2(H^1)}\leq c$,
\item[c)] $\|p^{\Delta t}_{h}\|_{L^{4/3}(L^{2})}\leq c$,
\item[d)] $\|\partial_t v^{\Delta t}_{h}\|_{L^{4/3}(H^{-1})}\leq c$.
\end{itemize}
Moreover, we also have the following estimate
\begin{equation}
  \label{eq:newest1}
  \int_0^T\|v^{\Delta t}_{h}-u^{\Delta
    t}_{h}\|_2^2\,dt\leq\Delta t\left(\frac{1}{3}-\theta+\theta^2\right)\sum_{m=1}^N\|u_h^m-u_h^{m-1}\|_{2}^2. 
\end{equation}
\end{proposition}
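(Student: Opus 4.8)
The plan is to derive items a)--d) from the discrete energy equality of Lemma~\ref{lem:discene} and the pressure estimate of Lemma~\ref{lem:pres}, and then to obtain~\eqref{eq:newest1} by a direct computation of the $L^2$-norm of the difference $v^{\Delta t}_{h}-u^{\Delta t}_{h}$ on each subinterval $[t_{m-1},t_m)$. First I would record the summed form of~\eqref{eq:a}, which gives the uniform bound $\|u_h^N\|_2^2+(2\theta-1)\sum_m\|u_h^m-u_h^{m-1}\|_2^2+2\Delta t\sum_m\|\nabla u_h^{m,\theta}\|_2^2\le\|u_h^0\|_2^2$; since $u_h^0=P_h(u_0)$ and the projection is $L^2$-stable, the right-hand side is bounded independently of $\Delta t,h$. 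For a), observe that on $[t_{m-1},t_m)$ the function $v^{\Delta t}_{h}$ is the segment between $u_h^{m-1}$ and $u_h^m$, so $\|v^{\Delta t}_{h}(t)\|_2\le\max(\|u_h^{m-1}\|_2,\|u_h^m\|_2)$, which the energy bound controls; similarly for b) the $L^\infty(L^2)$ part follows from $\|u_h^{m,\theta}\|_2\le\theta\|u_h^m\|_2+(1-\theta)\|u_h^{m-1}\|_2$, and the $L^2(H^1)$ part is exactly $\Delta t\sum_m\|\nabla u_h^{m,\theta}\|_2^2$, again bounded. Note the $L^2(H^1)$ bound plus Sobolev embedding $H^1\hookrightarrow L^6$ and interpolation give a uniform bound on $\|u^{\Delta t}_{h}\|_{L^{10/3}((0,T)\times\tore)}$, hence in particular on $\|u^{\Delta t}_{h}\|_{L^4(L^3)}$, which is what is needed next.

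For c), I would integrate the fourth-power of~\eqref{eq:pressure}: raising to the power $4/3$ and summing with weight $\Delta t$,
\begin{equation*}
  \|p^{\Delta t}_{h}\|_{L^{4/3}(L^2)}^{4/3}=\Delta t\sum_{m=1}^N\|\pth\|_2^{4/3}\le c\,\Delta t\sum_{m=1}^N\Big(\|u_h^{m,\theta}\|_{H^1}^{4/3}+\|u_h^{m,\theta}\|_{L^3}^{4/3}\|u_h^{m,\theta}\|_{H^1}^{4/3}\Big).
\end{equation*}
The first sum is bounded by $\|u^{\Delta t}_{h}\|_{L^2(H^1)}^{4/3}$ times a finite factor (Jensen/Hölder in the discrete measure, using $4/3\le2$ and $T<\infty$). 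For the second sum, Hölder in $m$ with exponents $3$ and $3/2$ splits $\|u_h^{m,\theta}\|_{L^3}^{4/3}\|u_h^{m,\theta}\|_{H^1}^{4/3}$ into $\big(\Delta t\sum\|u_h^{m,\theta}\|_{L^3}^{4}\big)^{1/3}\big(\Delta t\sum\|u_h^{m,\theta}\|_{H^1}^{2}\big)^{2/3}$, i.e.\ $\|u^{\Delta t}_{h}\|_{L^4(L^3)}^{4/3}\|u^{\Delta t}_{h}\|_{L^2(H^1)}^{4/3}$, both controlled by b). For d), test~\eqref{eq:thet1ac} against an arbitrary $w\in L^4(0,T;H^1_{\#})$, replacing $w$ by a suitable family $w_h\in X_h$ converging to it; the four terms are estimated by $\|\nabla u^{\Delta t}_{h}\|_{L^2(L^2)}$, by $\|nl_h(u^{\Delta t}_{h},u^{\Delta t}_{h})\|_{H^{-1}}\le\|u^{\Delta t}_{h}\|_{L^3}\|u^{\Delta t}_{h}\|_{H^1}$ using~\eqref{eq:nonlinear1}, and by $\|p^{\Delta t}_{h}\|_{L^{4/3}(L^2)}$, and one checks each lies in $L^{4/3}(0,T)$ using the integrability already obtained (the nonlinear term being the binding one, with $L^4(L^3)\cdot L^2(H^1)\subset L^{4/3}(H^{-1})$ by Hölder in time). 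A density argument then yields the claimed $L^{4/3}(H^{-1})$ bound on $\partial_t v^{\Delta t}_{h}$.

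Finally, for~\eqref{eq:newest1}, on $[t_{m-1},t_m)$ write $v^{\Delta t}_{h}(t)-u^{\Delta t}_{h}(t)=u_h^{m-1}+\frac{t-t_{m-1}}{\Delta t}(u_h^m-u_h^{m-1})-\big(\theta u_h^m+(1-\theta)u_h^{m-1}\big)=\big(\frac{t-t_{m-1}}{\Delta t}-\theta\big)(u_h^m-u_h^{m-1})$. Hence
\begin{equation*}
  \int_{t_{m-1}}^{t_m}\|v^{\Delta t}_{h}-u^{\Delta t}_{h}\|_2^2\,dt=\|u_h^m-u_h^{m-1}\|_2^2\int_{t_{m-1}}^{t_m}\Big(\tfrac{t-t_{m-1}}{\Delta t}-\theta\Big)^2dt,
\end{equation*}
and the change of variable $s=(t-t_{m-1})/\Delta t$ turns the integral into $\Delta t\int_0^1(s-\theta)^2\,ds=\Delta t\big(\tfrac13-\theta+\theta^2\big)$. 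Summing over $m$ gives~\eqref{eq:newest1} exactly. I expect the only genuinely delicate point to be d): one must justify using a time-dependent test function that is not finite-element valued, which is handled by the approximation property~\eqref{eq:apptest} together with the fact that~\eqref{eq:thet1ac} holds for all $w_h\in L^s(0,T;X_h)$; the rest is bookkeeping with Hölder inequalities in the discrete and continuous time variables.
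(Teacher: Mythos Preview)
Your proposal is correct and follows essentially the same approach as the paper: items a)--d) are deduced from Lemma~\ref{lem:discene} and Lemma~\ref{lem:pres} together with a comparison argument for the time derivative, and~\eqref{eq:newest1} is obtained by the identical pointwise computation $v^{\Delta t}_{h}-u^{\Delta t}_{h}=\big(\tfrac{t-t_{m-1}}{\Delta t}-\theta\big)(u_h^m-u_h^{m-1})$ followed by explicit integration. Your write-up is in fact more detailed than the paper's (which dispatches c) and d) in one line each), and your remark that the equality in~\eqref{eq:newest1} is exact is correct---the paper states it as an inequality only for convenience.
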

\begin{proof}
  The bound in $ L^{\infty}(0,T;L^{2}_{\#})\cap L^{2}(0,T;H^{1}_{\#})$ for $v^{\Delta t}_{h}$ follows
  from~\eqref{eq:vm} and Lemma~\ref{lem:discene}, as well as the bound on $u^{\Delta
    t}_{h}$ in b). The bound on the pressure $p^{\Delta t}_{h}$ follows again
  from~\eqref{eq:vm} and Lemma~\ref{lem:pres}. Finally, the bound on the time derivative
  of $v^{\Delta t}_{h}$ follows by~\eqref{eq:thet1ac} and a standard comparison
  argument. Concerning~\eqref{eq:newest1}, by using the definitions in~\eqref{eq:vm} we
  get for $t\in[t_{m-1}, t_m)$
\begin{align*}
  v^{\Delta t}_{h}-v^{\Delta t}_{h}
  &=\theta\,u_h^m+(1-\theta)\,u_h^{m-1}-u_{h}^{m-1}-\frac{t-t_{m-1}}{\Delta
    t}(u_{h}^{m}-u_{h}^{m-1})
  \\
  & =\left(\theta-\frac{t-t_{m-1}}{\Delta t}\right)\left(u_h^{m}-u_{h}^{m-1}\right).
\end{align*}
Then, computing explicitly the integrals, we have
\begin{align*}
  \int_0^T\|v^{\Delta t}_{h}-v^{\Delta t}_{h}\|_{2}^2\, dt
  &=\sum_{m=1}^N\|u_h^{m}-u_{h}^{m-1}\|_{2}^2\int_{t_{m-1}}^{t_m}\left(\theta-\frac{t-t_{m-1}}{\Delta
      t}\right)^2 dt
  \\
  & \leq\Delta
  t\left(\frac{1}{3}-\theta+\theta^2\right)\sum_{m=1}^N\|u_{h}^m-u_{h}^{m-1}\|_{2}^2,
\end{align*}
ending the proof.
\end{proof}
\section{Proof of the main theorem}
\label{sec:5}
In this section we prove Theorem~\ref{teo:main}. We split the proof in two main steps.
\begin{proof}[Proof of Theorem~\ref{teo:main}] We first prove the convergence of the
  numerical sequence to a Leray-Hopf weak solution and then we prove that the weak
  solution constructed is suitable, namely that it satisfies a certain estimate on the
  pressure and the local energy inequality~\eqref{eq:lei}.

\medskip

  \paragraph{\textbf{Step 1: Convergence towards a Leray-Hopf weak solution}}
  We start by observing that from a simple density argument, the test functions considered
  in~\eqref{eq:nsw} can be chosen in the space $L^{s}(0,T;H^{1}_{\dive})\cap
  C^1(0,T;L^{2}_{\dive})$, with $s\geq4$. In particular, by using~\eqref{eq:apptest} for
  any $w\in L^{s}(0,T;H^{1}_{\dive})\cap C^1(0,T;L^{2}_{\dive})$ such that $w(T,x)=0$ we
  can find a sequence $\{w_h\}_{h}\subset L^{s}(0,T;H^{1}_{\#})\cap C^{1}(0,T;L^{2}_{\#})$
  such that
  \begin{equation}
    \label{eq:3}
    \begin{aligned}
      &w_h\to w\qquad \textrm{strongly in }L^{s}(0,T;H^{1}_{\#})\qquad\textrm{ as }h\to 0,
      \\
      &w_h(0)\to w(0)\qquad \textrm{strongly in }L^{2}_{\#}\qquad\textrm{ as }h\to 0,
      \\
      &\partial_t w_h\weakto\partial_t w\qquad \textrm{ weakly in
      }L^{2}(0,T;L^{2}_{\#})\quad\textrm{ as }h\to 0.
    \end{aligned}
  \end{equation}
  Let $\{(v^{\Delta t}_{h}, v^{\Delta t}_{h}, p^{\Delta t}_{h})\}_{(\Delta t,h)}$, defined
  as in~\eqref{eq:vm}, be a family of solutions of~\eqref{eq:thet1ac}. By
  Proposition~\ref{prop:1}-a) we have that
  \begin{align*}
    & \left\{v^{\Delta t}_{h}\right\}_{(\Delta t,h)}\subset L^\infty(0,T;L^{2}_{\#})\quad\text{
      with uniform bounds on the norms}.
  \end{align*}
  Then, by standard compactness arguments there exists $v\in L^{\infty}(0,T;L^{2}_{\#})$,
  such that (up to a sub-sequence)
  \begin{equation}
    \label{eq:convv}
    \begin{aligned}
      &v^{\Delta t}_{h}\weakto v\qquad \textrm{ weakly in }L^{2}(0,T;L^{2}_{\#})\quad \textrm{ as
      }(\Delta t,h)\to(0,0).
    \end{aligned}
  \end{equation}
  Again by using Proposition~\ref{prop:1} b), there exists $u\in
  L^{\infty}(0,T;L^{2}_{\#})$ such that (up to a sub-sequence)
  \begin{equation}
    \label{eq:convu}
    \begin{aligned}
      &u^{\Delta t}_{h}\weakto u\qquad \textrm{ weakly in
      }L^{2}(0,T;L^{2}_{\#})\quad\textrm{ as }(\Delta t,h)\to(0,0),
      \\
      &u^{\Delta t}_{h} \weakto u\qquad \textrm{ weakly in
      } L^2(0,T;H^{1}_{\#})\quad \textrm{ as }(\Delta
      t,h)\to(0,0).
    \end{aligned}
  \end{equation} 
  Moreover, by using~\eqref{eq:apptest}, for any $q\in L^2(0,T;L^{2}_{\#})$ we can find a
  sequence $\{q_h\}_{h}\subset L^{2}(0,T;L^{2}_{\#})$ such that $q_h\in L^{2}(0,T;M_h)$
  and
  \begin{equation*}
    q_h\to q\qquad \textrm{ strongly in }L^{2}(0,T;L^{2}_{\#})\quad \textrm{ as }h\to 0.
  \end{equation*}
  Then, by using~\eqref{eq:convu} and~\eqref{eq:thet1ac} we have that
  \begin{equation*}
    0=\int_0^T\left(\dive u^{\Delta t}_{h},q_h\right)\,dt\to \int_0^T\left(\dive
      u,q\right)\,dt\quad \textrm{ as }(\Delta t,h)\to(0,0),
  \end{equation*}
  hence $u$ is divergence-free, since it belongs to  $H^{1}_{\dive}$. Let us
  consider~\eqref{eq:newest1}, then
  \begin{equation}
    \label{eq:2}
    \int_{0}^{T}\|v^{\Delta t}_{h}-u^{\Delta t}_{h}\|_{2}^2\,dt\leq\Delta
    t\left(\frac{1}{3}-\theta+\theta^2\right)
    \sum_{m=1}^N\|u_h^m-u_h^{m-1}\|_{2}^2 \leq c\,\Delta t, 
  \end{equation}
  where in the last inequality we used Proposition~\ref{prop:1}. Hence, the integral
  $\int_{0}^{T}\|v^{\Delta t}_{h}-u^{\Delta t}_{h}\|_{2}^2\,dt$ vanishes as $\Delta
  t\to0$. Then, by using~\eqref{eq:convv} and~\eqref{eq:convu} it easily follows that
  $v=u$.

  At this point we note that Proposition~\ref{prop:1} b) and d) imply that (with uniform
  bounds) 
  \begin{equation*}
    \{\partial_{t}v^{\Delta t}_{h}\}_{(\Delta t,h)}\subset
    L^{\frac{4}{3}}(0,T;H^{-1})\qquad\text{and}\qquad \{u^{\Delta t}_{h}\}_{(\Delta t,h)}\subset
    L^{2}(0,T;H^{1}_{\#}). 
  \end{equation*}
  Then, by using Lemma~\ref{lem:comp} and the fact that $u=v$ we get that
  \begin{equation}
    \label{eq:convtot}
    u^{\Delta t}_{h}\,v^{\Delta t}_{h}\weakto|u|^{2}\qquad \textrm{ weakly in
    }L^{1}((0,T)\times\tore)\quad\textrm{ as }(\Delta t,h)\to(0,0).  
  \end{equation}
  In particular, by using~\eqref{eq:2} and~\eqref{eq:convtot} we have that
  \begin{equation}
    \label{eq:convuv}
    \begin{aligned}
      &v^{\Delta t}_{h}\to u\qquad \textrm{ strongly in }L^{2}(0,T;L^{2}_{\#})\quad\textrm{
        as }(\Delta t,h)\to(0,0),
      \\
      &u^{\Delta t}_{h}\to u\qquad \textrm{ strongly in }L^{2}(0,T;L^{2}_{\#})\quad\textrm{
        as }(\Delta t,h)\to(0,0).
    \end{aligned}
  \end{equation}
  Concerning the pressure term, the uniform bound in Proposition~\ref{prop:1} d) ensures
  the existence of $p\in L^{\frac{4}{3}}(0,T;L^{2}_{\#})$ such that (up to a sub-sequence)
  \begin{equation}
    \label{eq:convp}
    p^{\Delta t}_{h}\weakto p\qquad \textrm{ weakly in
    }L^{\frac{4}{3}}(0,T;L^{2}_{\#})\quad \textrm{ as }(\Delta t,h)\to(0,0).  
  \end{equation}
  Then, by using~\eqref{eq:3} and~\eqref{eq:convv} we have that
  \begin{equation*}
    \begin{aligned}
      &\lim_{(\Delta t,h)\to(0,0)}\int_{0}^{T}(\partial_t v^{\Delta
        t}_{h},w_h)\,dt\\
        &=\lim_{(\Delta t,h)\to(0,0)}\left(-\int_{0}^{T}(v^{\Delta
          t}_{h},\partial_t w_h)\,dt-(u_0, w_{h}(0))\right)
      \\
      &
=-\int_{0}^{T}(u,\partial_t w)\,dt-(u_0, w(0)),
    \end{aligned}
  \end{equation*}
  Next, by using~\eqref{eq:convu} we also get
  \begin{equation*}
    \lim_{(\Delta t,h)\to(0,0)}\int_{0}^{T}(\nabla u^{\Delta t}_{h},\nabla
    w_{h})\,dt=\int_{0}^{T}(\nabla u, \nabla w)\,dt. 
  \end{equation*}
  By~\eqref{eq:3},~\eqref{eq:convp}, and the fact that $w$ is (weakly) divergence-free we
  obtain
  \begin{equation*}
    \int_{0}^{T}(p^{\Delta t}_{h},\dive w_h)\,dt\to 0\qquad \textrm{ as }(\Delta t,h)\to(0,0).
  \end{equation*}
  Concerning the non-linear term, let $s\geq4$ and $s'$ and $s^*$ be real numbers such
  that
  \begin{equation}
    \label{eq:ss}
    \frac1s+\frac{1}{s'}=1 \qquad\text{and}\qquad \frac1s+\frac{1}{s^*}=\frac{1}{2}.
  \end{equation}
  By using~\eqref{eq:convuv},~\eqref{eq:convu}, a standard interpolation argument, and
  Proposition~\ref{prop:1} b) it follows that
  \begin{equation*}
    u^{\Delta t}_{h}\to u\qquad \textrm{ strongly in }L^{s^*}(0,T;L^{3}_{\#})\quad
    \textrm{as }(\Delta t,h)\to(0,0), 
  \end{equation*}
  and by~\eqref{eq:nonlinear} with a standard compactness argument
  \begin{equation*}
    nl_h\left(u^{\Delta t}_{h},u^{\Delta t}_{h}\right) \rightharpoonup (u\cdot\nabla)\,
    u\qquad 
    \mbox{ weakly in } L^{s'}(0,T;H^{-1})\quad \textrm{ as }(\Delta t,h)\to(0,0). 
  \end{equation*}
  Then, by using also~\eqref{eq:3} it follows that
  \begin{equation*}
    \int _0^T b_h(u^{\Delta t}_{h},u^{\Delta t}_{h},w_h)\, dt \to \int _0^T
    \big((u\cdot\nabla)\,u,w\big)\, dt\qquad\textrm{as }(\Delta t,h)\to(0,0). 
  \end{equation*}
  Finally, the energy inequality follows by Lemma~\ref{lem:discene}, by using the lower
  semi-continuity of the $L^{2}$-norm with respect to the weak convergence. 
  \begin{remark}
    Notice that in order to prove the global energy inequality~\eqref{eq:gei} it seems
    crucial to obtain the estimate~\eqref{eq:a} on the discrete solution. Indeed, even if
    the bounds in Proposition~\ref{prop:1} are given, then it is possible to prove the
    convergence to a distributional solution, but the global energy
    inequality~\eqref{eq:gei} would be very complicated to show when it is not satisfied
    at the approximation level. The regularity of weak solutions is in fact not enough to
    perform the natural integration by parts and energy estimates; the energy inequality
    can proved by lower semi-continuity results along approximating sequences, see for
    instance~\cite{Tem1977b} for this classical problem.
  \end{remark}

  \medskip

  \paragraph{\textbf{Step 2: Proof of the Local Energy Inequality}}

  In order to conclude the proof of Theorem~\ref{teo:main} we need to prove that the
  Leray-Hopf weak solution constructed in Step $1$ is suitable. According to
  Definition~\ref{def:suit} this requires just to prove the local energy inequality.
  To this end, let us consider a smooth, periodic in the space variable function $\phi\geq
  0$, vanishing for $t=0,T$, and use as test function $P_h(u^{\Delta t}_{h}\phi)$ in the
  momentum equation in~\eqref{eq:thet1ac}.

  We first consider the term involving the time derivative, which we handle as follows:
  \begin{align*}
    & \int_{0}^{T}\left(\partial_t v^{\Delta t}_{h},P_h(u^{\Delta
        t}_{h}\phi)\right)\,dt=\int_{0}^{T}\left(\partial_t v^{\Delta t}_{h},P_h(u^{\Delta
        t}_{h}\phi)-u^{\Delta t}_{h}\phi+u^{\Delta t}_{h}\phi\right)\,dt
    \\
    & =\int_{0}^{T}\left(\partial_t v^{\Delta t}_{h},u^{\Delta t}_{h}\phi\right)
    dt+\int_{0}^{T}\left(\partial_t v^{\Delta t}_{h},P_h(u^{\Delta t}_{h}\phi)-u^{\Delta
        t}_{h}\phi\right)\,dt=:I_1+I_2.
  \end{align*}
  Concerning the term $I_1$ we have that
  \begin{equation*}
    \begin{aligned}
      I_{1}&=\int_0^T(\partial_t v^{\Delta t}_{h},(v^{\Delta t}_{h}-v^{\Delta
        t}_{h}+u^{\Delta t}_{h})\,\phi)\,dt
      \\
      &=\int_0^T(\partial_t v^{\Delta t}_{h},v^{\Delta
        t}_{h})\,\phi\,dt+\int_0^T(\partial_t v^N_h,(u^{\Delta t}_{h}-v^{\Delta
        t}_{h})\,\phi)\,dt
      \\
      &=:I_{11}+I_{12}.
    \end{aligned}
  \end{equation*}
  Let us first consider $I_{11}$. By splitting the integral over $[0,T]$ as the sum of
  integrals over $[t_{m-1},t_m]$ and, by integrating by parts, we get
  \begin{equation*}
    \begin{aligned}
      &\int_0^T(\partial_t v^{\Delta t}_{h},v^{\Delta
        t}_{h}\phi)\,dt=\sum_{m=1}^N\int_{t_{m-1}}^{t_m}(\partial_t v^{\Delta
        t}_{h},v^{\Delta t}_{h}\phi)\,dt=
      \sum_{m=1}^N\int_{t_{m-1}}^{t_m}(\frac{1}{2}\partial_t |v^{\Delta
        t}_{h}|^2,\phi)\,dt
      \\
      &=\frac{1}{2}\sum_{m=1}^N(|u_h^{m}|^2,\phi(t_{m},x))-
      (|u_h^{m-1}|^2,\phi(t_{m-1},x))-\sum_{m=1}^N\int_{t_{m-1}}^{t_m}(\frac{1}{2}
      |v^{\Delta t}_{h}|^2,\partial_t\phi)\,dt,
    \end{aligned}
  \end{equation*}
  where we used that $\partial_t v^{\Delta t}_{h}(t)=\frac{u_h^m-u_h^{m-1}}{\Delta t}$,
  for $t\in[t_{m-1},t_m[$. Next, since the sum telescopes and $\phi$ is with compact
  support in $(0,T)$ we get
  \begin{equation*}
    \int_0^T(\partial_t v^{\Delta t}_{h},v^{\Delta t}_{h}\phi)\,dt =-\int_{0}^{T}\big(\frac{1}{2}
    |v^{\Delta t}_{h}|^2,\partial_t\phi\big)\,dt.
  \end{equation*}
  By the strong convergence of $v^{\Delta t}_{h}\rightarrow u$ in $L^2(0,T;L^2_{\#})$ we
  can conclude that
  \begin{equation*}
    \lim_{(\Delta t,h)\to(0,0)}\int_0^T(\partial_t v^{\Delta t}_{h},v^{\Delta
      t}_{h}\phi)\,dt= -\int_{0}^{T}\big(\frac{1}{2}|u|^2,\partial_t\phi\big)\,dt.
  \end{equation*} 
  Then, we consider the term $I_{12}$. Since $u^{\Delta t}_{h}$ is constant on the
  interval $[t_{m-1}, t_m[$ we can write
  \begin{equation*}
    \begin{aligned}
      &\int_0^T(\partial_t v^{\Delta t}_{h},(u^{\Delta t}_{h}-v^{\Delta
        t}_{h})\,\phi)\,dt= -\sum_{m=1}^N\int_{t_{m-1}}^{t_m}(\partial_t(v^{\Delta
        t}_{h}-u^{\Delta t}_{h}),(v^{\Delta t}_{h}-u^{\Delta t}_{h})\,\phi)\,dt
      \\
      &=-\sum_{m=1}^N\int_{t_{m-1}}^{t_m}\left(\partial_t\frac{|v^{\Delta t}_{h}-u^{\Delta
            t}_{h}|^2}{2},\phi\right)\,dt
      \\
      &=\sum_{m=1}^N\int_{t_{m-1}}^{t_m}\left(\frac{|v^{\Delta t}_{h}-u^{\Delta
            t}_{h}|^2}{2}, \partial_t\phi\right)\,dt
       -\sum_{m=1}^{N}\left[\left(\frac{|v^{\Delta t}_{h}(t_m)-u^{\Delta t}_{h}(t_{m})|^2}{2}
        \phi(t_m)\right)+\right.\\ &-\left.\left(\frac{|v^{\Delta t}_{h}(t_{m-1})-u^{\Delta
            t}_{h}(t_{m-1})|^2}{2}, \phi(t_{m-1})\right)\right]\\
&       =\sum_{m=1}^N\int_{t_{m-1}}^{t_m}\left(\frac{|v^{\Delta t}_{h}-u^{\Delta
            t}_{h}|^2}{2}, \partial_t\phi\right)\,dt,
    \end{aligned}
  \end{equation*}
  where in the last line we have used the fact we do not have boundary terms at $t=0$ and
  $t=t_N$ since $\phi$ has compact support in $(0,T)$ and the sum telescopes. Then, since
  $u^{\Delta t}_{h}-v^{\Delta t}_{h}$ vanishes (strongly) in $L^2(0,T;L^{2}_{\#})$, we get
  that $I_{12}\rightarrow0$ as $(\Delta t,h)\rightarrow(0,0)$.

  We have that the term $I_2\to 0$ as $(\Delta t,h)\rightarrow(0,0)$. Indeed, by the
  discrete commutator property~\eqref{eq:dcp1}, Proposition~\ref{prop:1}, and the inverse
  inequality~\eqref{eq:imm} we can infer that
  \begin{align*}
    \big|I_2\big|&=\left|\int_{0}^{T}\left(\partial_t v^{\Delta t}_{h},P_h(u^{\Delta t}_{h}\phi)-u^{\Delta
        t}_{h}\phi\right)\,dt\right|
    \\
    & \leq \int_{0}^{T}\|\partial_t v^{\Delta t}_{h}\|_{H^{-1}}\|P_h(u^{\Delta
      t}_{h}\phi)-u^{\Delta t}_{h}\phi\|_{H^1}dt
    \\
    & \leq c\,h\|\partial_t v^{\Delta t}_{h}\|_{L^{\frac{4}{3}}(H^{-1})}\|u^{\Delta
      t}_{h}\|_{L^4(H^1)}
    \\
    & \leq c\,h^{\frac{1}{2}}\|\partial_t v^{\Delta
      t}_{h}\|_{L^{\frac{4}{3}}(H^{-1})}\|u^{\Delta t}_{h}\|^{\frac{1}{2}}_{L^\infty(L^2)}\|u^{\Delta
      t}_{h}\|^{\frac{1}{2}}_{L^2(H^1)}\leq c\,h^{\frac{1}{2}}.
  \end{align*}
  Hence, also this term vanishes as $h\to0$ and this concludes the considerations for the
  term involving the time-derivative. \par
  Concerning the viscous term, by adding and subtracting $\nabla u^{\Delta t}_{h}\phi$ we
  obtain the following three terms:
  \begin{equation*}
    \begin{aligned}
      (\nabla u^{\Delta t}_{h},\nabla P_h(u^{\Delta t}_{h} \phi)) & = (\nabla u^{\Delta
        t}_{h},\nabla (u^{\Delta t}_{h} \phi))+ (\nabla u^{\Delta t}_{h},\nabla
      (P_h(u^{\Delta t}_{h} \phi)-u^{\Delta t}_{h}\phi))
      \\
      & =(|\nabla u^{\Delta t}_{h}|^2,\phi)-(\frac{1}{2}|u^{\Delta
        t}_{h}|^2,\Delta\phi)+R_{visc},
    \end{aligned}
  \end{equation*}
  where the ``viscous remainder'' $R_{visc}$ is defined as follows 
  \begin{equation*}
    R_{visc}:=\big(\nabla u^{\Delta t}_h,\nabla [P_h(u^{\Delta t}_h\phi)-u^{\Delta t}_h\phi]\big).
  \end{equation*}
  Since $u^{\Delta t}_{h}$ converges to $u$ weakly in $L^2(0,T;H_{\#}^1)$ and strongly in
  $L^2(0,T;L^2_{\#})$, by integrating over $(0,T)$ we can infer the following two results:
  \begin{equation*}
    \begin{aligned}
       \liminf_{(\Delta t,h)\to (0,0)}\int_0^T (|\nabla u^{\Delta t}_{h}|^2,\phi)\,dt &\geq \int_0^T
      (|\nabla u|^2,\phi)\, dt,
      \\
   \lim_{(\Delta t,h)\to (0,0)}    \int_0^T (\frac{1}{2}| u^{\Delta t}_{h}|^2,\Delta\phi)\,dt &= \int_0^T
      (\frac{1}{2}| u|^2,\Delta\phi)\,dt,
    \end{aligned}
  \end{equation*}
  where in the first inequality we used that $\phi$ is non-negative.  For the remainder
  $R_{visc}$, by using again the discrete commutator property from
  Definition~\ref{def:dcp}, we have that
  \begin{equation*}
   \left| \int_{0}^{T}  R_{visc}\,dt\right|\leq c\, h\int_{0}^{T}\|\nabla u^{\Delta t}_{h}\|^{2}\,dt\to0
    \qquad \textrm{ as }(\Delta t,h)\to(0,0). 
  \end{equation*}

  We consider now the nonlinear term $b_h$. We have
  \begin{equation}
    \label{eq:nonlinear_proof}
    \begin{aligned}
      b_h(u^{\Delta t}_{h},u^{\Delta t}_{h},P_h(u^{\Delta t}_{h}\phi))& =b_h(u^{\Delta
        t}_{h},u^{\Delta t}_{h},{u}_h^N\phi)+b_h(u^{\Delta t}_{h},u^{\Delta
        t}_{h},P_h(u^{\Delta t}_{h}\phi)-u^{\Delta t}_{h}\phi)
      \\
      & =b_h(u^{\Delta t}_{h},u^{\Delta t}_{h},u^{\Delta t}_{h}\phi)+R_{nl}.
    \end{aligned}
  \end{equation}
  The ``nonlinear remainder'' $R_{nl}:=b_h(u^{\Delta t}_{h},u^{\Delta t}_{h},P_h(u^{\Delta
    t}_{h}\phi)-u^{\Delta t}_{h}\phi)$ can be estimated by using ~\eqref{eq:nonlinear1},
  the discrete commutator property, and~\eqref{eq:imm}. Indeed, we have
  \begin{equation*}
    \begin{aligned}
      |R_{nl}| & \leq \|nl_h(u^{\Delta t}_{h},u^{\Delta t}_{h})\|_{H^{-1}}\|P_h(u^{\Delta
        t}_{h}\phi)-u^{\Delta t}_{h}\phi\|_{H^1}
      \\
      & \leq c\,h \|u^{\Delta t}_{h}\|_{3} \|u^{\Delta t}_{h}\|_{H^1} \|u^{\Delta
        t}_{h}\|_{H^1}
      \\
      & \leq c\,h \|u^{\Delta t}_{h}\|_{2}^{1/2} \|u^{\Delta t}_{h}\|_{H^1}^{1/2}
      \|u^{\Delta t}_{h}\|_{H^1}^{2}
      \\
      &\leq c\,{h}^{\frac{1}{2}} \|u^{\Delta t}_{h}\|_{2}\|u^{\Delta t}_{h}\|_{H^1}^{2},
    \end{aligned}
  \end{equation*}
  hence, by integrating in time
  \begin{equation*}
    \int_{0}^{T}R_{nl}\,dt\to0 \quad\textrm{ as }(\Delta t,h)\to(0,0).
  \end{equation*}
  The definition of $nl_{h}$ in~\eqref{eq:nonlinear} allows us to handle the first term on
  the right hand side in~\eqref{eq:nonlinear_proof} as follows.
  \begin{align*}
    b_h(u^{\Delta t}_{h},u^{\Delta t}_{h},u^{\Delta t}_{h}\phi) &=\left((u^{\Delta
        t}_{h}\cdot \nabla)\, u^{\Delta t}_{h}, u^{\Delta
        t}_{h}\phi\right)+\frac12\left((u^{\Delta t}_{h} \,\dive u^{\Delta
        t}_{h},u^{\Delta t}_{h}\phi\right)
    \\
    & =\left((u^{\Delta t}_{h}\cdot\nabla)\,\frac12|u^{\Delta t}_{h}|^2+\frac12|u^{\Delta
        t}_{h}|^2\dive u^{\Delta t}_{h},\phi\right)
    \\
    & =\left(\dive\Big(u^{\Delta t}_{h} \frac12|u^{\Delta
        t}_{h}|^2\Big),\phi\right)=-\left(u^{\Delta t}_{h} \frac12|u^{\Delta
        t}_{h}|^2,\nabla\phi\right).
  \end{align*}
  Then, for $4<s\leq 6$ and $s^*$ as in~\eqref{eq:ss} it follows that
  \begin{equation*}
    u^{\Delta t}_{h}\, \frac12|u^{\Delta t}_{h}|^2\to u \,\frac12|u|^2\qquad \textrm{strongly in }
    L^{\frac{s^*}{3}}(0,T;L^1) \quad\textrm{ as }(\Delta t,h)\to(0,0),
  \end{equation*}
  and one shows that
  \begin{equation*}
    \int_0^T  b_h(u^{\Delta t}_{h},u^{\Delta t}_{h},u^{\Delta t}_{h}\phi)\,dt \to -\int_0^T
    \Big(u\, \frac12|u|^2,\nabla\phi\Big)\,dt\quad\textrm{ as }\quad(\Delta t,h)\to(0,0).
  \end{equation*}

  The last term we consider is that involving the pressure. By integrating by parts we
  have
  \begin{equation*}
    \label{eq:pressure_proof}
    \begin{aligned}
      (p^{\Delta t}_{h},\dive P_h(u^{\Delta t}_{h}\phi))
      & =(p_h^{\Delta t} u^{\Delta t}_{h},\nabla\phi)+R_{p1}+R_{p2}.
    \end{aligned}
  \end{equation*}
  where the two ``pressure remainders'' are defined as follows
  \begin{equation*}
    R_{p1}:=\big(p^{\Delta t}_h,\dive(P_h(u^{\Delta t}_h\phi)-u^{\Delta
      t}_h\phi)\big)\qquad\text{and}\qquad 
    R_{p2}:=\big(\phi \, p^{\Delta t}_h,\dive u^{\Delta t}_h\big).
  \end{equation*}
%
By using again the discrete commutator property~\eqref{eq:dcp2}
  and~\eqref{eq:imm}, we easily get
  \begin{equation*}
    \begin{aligned}
      |R_{p1}| &\leq\|p^{\Delta t}_{h}\|_{2}\|P_h(u^{\Delta t}_{h}\phi)-u^{\Delta
        t}_{h}\phi\|_{H^1}
      \\
      &\leq c\, h\|p^{\Delta t}_{h}\|_{2}\|u^{\Delta t}_{h}\|_{H^1}.
      \end{aligned}
    \end{equation*}
    Then, by integrating in time and using~\eqref{eq:imm} we have
    \begin{equation*}
    \begin{aligned}
      \left|\int_0^T R_{p1}\,dt\right|& \leq c\, h^{\frac12}\|p^{\Delta t}_{h}\|_{L^{\frac43}(L^2)}\|u^{\Delta
        t}_{h}\|_{L^2(H^1)}^\frac12\|u^{\Delta t}_{h}\|_{L^\infty(L^2)}^\frac12,
    \end{aligned}
  \end{equation*}
 which implies
  \begin{equation*}
    \int_0^T R_{p1}\,dt\to0\qquad  \textrm{ as
    }(\Delta t,h)\to(0,0).
  \end{equation*}
  The term $R_{p2}$ can be treated in the same way but now using the discrete commutation
  property for the projector over $Q_{h}$
  \begin{equation*}
    \begin{aligned}
      |R_{p2}|&\leq c\|Q_h(p^{\Delta t}_{h}\phi)-\phi p^{\Delta t}_{h}\|_{2}\|u^{\Delta
        t}_{h}\phi\|_{H^1}
      \\
      & \leq c\, h^{\frac12}\|p^{\Delta t}_{h}\|_{L^{\frac43}(L^2)}\|u^{\Delta
        t}_{h}\|_{L^2(H^1)}^\frac12\|u^{\Delta t}_{h}\|_{L^\infty(L^2)}^\frac12,
    \end{aligned}
  \end{equation*}
  and finally this implies that
  \begin{equation*}
    \int_0^T R_{p2}\,dt\to0\qquad \textrm{ as  }(\Delta t,h)\to(0,0). 
  \end{equation*}
 It remains to prove that 
 \begin{equation*}
 \begin{aligned}
 \int_{0}^{T}(p_h^{\Delta t}\, u^{\Delta t}_{h},\nabla\phi)\to\int_{0}^{T}(p\,
 u,\nabla\phi)\qquad \textrm{ as  }(\Delta t,h)\to(0,0)
,
 \end{aligned}
 \end{equation*}
 which is an easy consequence of~\eqref{eq:convuv},~\eqref{eq:convp}, and Proposition~\ref{prop:1} b). 
In conclusion, collecting all terms we have finally proved the local energy inequality~\eqref{eq:lei}.
\end{proof}

\end{document}